\numberwithin{equation}{section}
\newtheorem{definition}{Definition}
\newtheorem{proposition}{Proposition}
\newtheorem{lemma}{Lemma}
\newtheorem{theorem}{Theorem}
\newtheorem{remark}{Remark}
\newtheorem{corollary}{Corollary}
\newtheorem{example}{Example}
\newtheorem*{conjecture}{Conjecture}
\newtheorem*{question}{Question}
\numberwithin{equation}{section}
\begin{document}

\newcommand{\riem}{(M^m, \langle \, , \, \rangle)}
\newcommand{\Hess}{\mathrm{Hess}\, }
\newcommand{\hess}{\mathrm{hess}\, }
\newcommand{\cut}{\mathrm{cut}}
\newcommand{\ind}{\mathrm{ind}}
\newcommand{\ess}{\mathrm{ess}}
\newcommand{\longra}{\longrightarrow}
\newcommand{\eps}{\varepsilon}
\newcommand{\ra}{\rightarrow}
\newcommand{\hra}{\hookrightarrow}
\newcommand{\vol}{\mathrm{vol}}
\newcommand{\di}{\mathrm{d}}
\newcommand{\R}{\mathbb R}
\newcommand{\C}{\mathbb C}
\newcommand{\Z}{\mathbb Z}
\newcommand{\N}{\mathbb N}
\newcommand{\HH}{\mathbb H}
\newcommand{\grad}{{{\rm{grad}}}}
\newcommand{\esse}{\mathbb S}
\newcommand{\bull}{\rule{2.5mm}{2.5mm}\vskip 0.5 truecm}
\newcommand{\binomio}[2]{\genfrac{}{}{0pt}{}{#1}{#2}} %identico ad \atop
\newcommand{\metric}{\langle \, , \, \rangle}
\newcommand{\lip}{\mathrm{Lip}}
\newcommand{\loc}{\mathrm{loc}}
\newcommand{\diver}{\mathrm{div}}
\newcommand{\disp}{\displaystyle}
\newcommand{\rad}{\mathrm{rad}}
\newcommand{\Ricc}{\mathrm{Ricc}}
\newcommand{\mmetric}{\langle\langle \, , \, \rangle\rangle}
\newcommand{\sn}{\mathrm{sn}}
\newcommand{\cn}{\mathrm{cn}}
\newcommand{\ink}{\mathrm{in}}
\newcommand{\hol}{\mathrm{H\ddot{o}l}}
\newcommand{\capac}{\mathrm{cap}}
\newcommand{\bmo}{\{b <0\}}
\newcommand{\bmuo}{\{b \le 0\}}

\newcommand{\essem}{\mathds{S}^m}
\newcommand{\erre}{\mathds{R}}
\newcommand{\errem}{\mathds{R}^m}
\newcommand{\enne}{\mathds{N}}
\newcommand{\acca}{\mathds{H}}

\newcommand{\cvett}{\Gamma(TM)}
\newcommand{\cinf}{C^{\infty}(M)}
\newcommand{\sptg}[1]{T_{#1}M}
\newcommand{\partder}[1]{\frac{\partial}{\partial {#1}}}
\newcommand{\partderf}[2]{\frac{\partial {#1}}{\partial {#2}}}
\newcommand{\ctloc}{(\mathcal{U}, \varphi)}
\newcommand{\fcoord}{x^1, \ldots, x^n}
\newcommand{\ddk}[2]{\delta_{#2}^{#1}}
\newcommand{\christ}{\Gamma_{ij}^k}
\newcommand{\ricc}{\operatorname{Ricc}}
\newcommand{\supp}{\operatorname{supp}}
\newcommand{\sgn}{\operatorname{sgn}}
\newcommand{\rg}{\operatorname{rg}}
\newcommand{\inv}[1]{{#1}^{-1}}
\newcommand{\id}{\operatorname{id}}
\newcommand{\jacobi}[3]{\sq{\sq{#1,#2},#3}+\sq{\sq{#2,#3},#1}+\sq{\sq{#3,#1},#2}
=0}
\newcommand{\lie}{\mathfrak{g}}
\newcommand{\Lim}{{\rm Lim }\; \varphi }
\newcommand{\wedgedot}{\wedge\cdots\wedge}
\newcommand{\rp}{\erre\mathds{P}}
\newcommand{\II}{\operatorname{II}}
\newcommand{\gradh}[1]{\nabla_{H^m}{#1}}
\newcommand{\absh}[1]{{\left|#1\right|_{H^m}}}
\newcommand{\mob}{\mathrm{M\ddot{o}b}}
\newcommand{\mab}{\mathfrak{m\ddot{o}b}}
\newcommand{\foc}{\mathrm{foc}}
\newcommand{\F}{\mathcal{F}}
\newcommand{\Cf}{\mathcal{C}_f}
\newcommand{\cutf}{\mathrm{cut}_{f}}
\newcommand{\Cn}{\mathcal{C}_n}
\newcommand{\cutn}{\mathrm{cut}_{n}}
\newcommand{\Ca}{\mathcal{C}_a}
\newcommand{\cuta}{\mathrm{cut}_{a}}
\newcommand{\cutc}{\mathrm{cut}_c}
\newcommand{\cutcf}{\mathrm{cut}_{cf}}
\newcommand{\rk}{\mathrm{rk}}
\newcommand{\crit}{\mathrm{crit}}
\newcommand{\diam}{\mathrm{diam}}
\newcommand{\haus}{\mathcal{H}}
\newcommand{\po}{\mathrm{po}}
\newcommand{\gr}{\mathcal{G}}
\newcommand{\metrict}{( \, , \, )}
\newcommand{\Pro}{\mathbb{P}}
\newcommand{\Bo}{\mathbb{B}}

\author{G. Pacelli Bessa \and Luqu\'{e}sio P. Jorge \and Luciano
Mari}
\title{\textbf{On the spectrum of  bounded immersions}}
\date{}
\maketitle
\scriptsize \begin{center} Departamento de Matem\'atica,
Universidade Federal do Cear\'a\\
Campus do  PICI, 60455-760 Fortaleza-Ce, (Brazil)\\
E-mail address: bessa@mat.ufc.br, ljorge@mat.ufc.br, lucio.mari@libero.it
\end{center}
%\scriptsize \begin{center} Dipartimento di Matematica,
%Universit\`a
%degli studi di Milano,\\
%Via Saldini 50, I-20133 Milano (Italy)\\
%E-mail address: lucio.mari@libero.it,
%\end{center}

\normalsize

\vspace{0.5cm}

\begin{abstract} In this paper, we investigate the relationship between the
discreteness of the spectrum of a non-compact, extrinsically bounded submanifold
$\varphi \colon M^m \ra N^n$ and the Hausdorff dimension of its limit set
$\lim\varphi$. \hspace{-1mm}In particular, we prove that if $\varphi \colon \!M^2 \ra D \subseteq
\R^3$ is a minimal immersion into an open, bounded, strictly convex subset $D$
with $C^2$-boundary, then $M$ has discrete spectrum provided that
$\haus_\Psi(\lim\varphi \cap D)=0$, where $\haus_\Psi$ is the generalized
Hausdorff measure of order $\Psi(t) = t^2|\log t|$. Our theorem applies to a
number of examples recently constructed by various authors in the light of N.
Nadirashvili's discovery of complete, bounded minimal disks in $\R^3$, as well
as to solutions of Plateau's problems, giving a fairly complete
answer to a question posed by S.T. Yau in his Millenium Lectures. Suitable counter-examples show the sharpness of our results: in particular, we develop a simple criterion for the existence of essential spectrum which is suited for the techniques
developed after Jorge-Xavier and Nadirashvili's examples.
\end{abstract}

\section{Introduction}\hspace{.5cm}
The Calabi-Yau conjectures have their origin in a set of two problems proposed
by E. Calabi in the 1960's, about the non-existence of complete minimal
hypersurfaces of $\mathbb{R}^{n}$ subjected to certain extrinsic bounds
(\cite{calabi2}, see also \cite[p. 212]{chern}). \begin{itemize}\item Calabi
proposed the first conjecture as an exercise. He wrote:  \textquotedblleft {\em Prove that
any  complete minimal hypersurface in $\mathbb{R}^{n}$ must be
unbounded.}\textquotedblright
\item The second problem, on the other hand, was  proposed almost as an unlikely
conjecture. He wrote
 \textquotedblleft {\em  A more ambitious conjecture is: a complete  minimal
hypersurface in $\mathbb{R}^{n}$ has an unbounded projection in every $(n -
2)$-dimensional flat subspace.}\textquotedblright
\end{itemize} It is known by works of  L. Jorge-F. Xavier  \cite{jorge-xavier-annals} and N.
Nadirashvili   \cite{Nadirashvili} that both conjectures turned out to be false. More precisely, Jorge-Xavier constructed a
non-flat, complete minimal surface lying between two parallel planes in
$\mathbb{R}^{3}$, showing that the second conjecture was false in general,
whereas  N. Nadirashvili  constructed a bounded, complete   minimal immersion of
the unit disk $\mathbb{D}$  into $\mathbb{R}^{3}$, contradicting the statement
of the first conjecture. Both  counter-examples lie on the construction of
suitable labyrinths of compact subsets and clever use of Runge's Theorem in
order to find appropriate Weierstrass data (the interested reader could  consult
\cite{collinrosenberg} for a detailed proof of the main theorems in
\cite{Nadirashvili}). In view of these results, in his Millennium Lectures
\cite{Yau3}, \cite{yau4}, S. T. Yau  \textquotedblleft
calibrated\textquotedblright \hspace{.1mm}
the Calabi conjectures, proposing a new set of questions about bounded minimal
hypersurfaces.\begin{itemize}\item[]  He wrote: \textquotedblleft {\em  It is
known \cite{Nadirashvili} that there are complete minimal surfaces properly
immersed into
the $[$open$]$ ball.  What is the geometry of these surfaces?  Can they be
embedded?   Since the
curvature must tend to minus infinity, it is important to find the precise
asymptotic
behavior of these surfaces near their ends. Are their $[$Laplacian$]$ spectra
discrete?\textquotedblright}. \end{itemize}
 These questions became known in the literature  as the Calabi-Yau conjectures
on minimal hypersurfaces. The problem about the existence of a bounded,
complete,  \emph{embedded}  minimal surfaces in $\mathbb{R}^{3}$ was negatively
answered by T. Colding-W. Minicozzi  in  the finite topology case, see
\cite{colding-minicozzi}. Although Yau's question suggests that Nadirashvili's
example is \emph{properly} immersed into an open ball $D \subset \mathbb{R}^{3}$
(that is, the pre-image of compact subsets  of $D$ are compacts in the surface),
this is not  clear and one may consider,  as the first  problem in the
Calabi-Yau conjectures, the question:  \textquotedblleft {\em Does there  exist
a complete minimal surface properly immersed into a ball of
$\mathbb{R}^{3}$?\,\textquotedblright .}

This question, and more generally the search for understanding the shape of the
limit set of bounded minimal surfaces, has stimulated  intense research in the
last fifteen years\footnote{Given a map $\varphi : M \ra N$ between topological
manifolds, the limit set of $\varphi$, $\lim \varphi$, is defined as
\begin{eqnarray}
\lim \varphi = \{x\in  N ; \;\exists \,\{p_{n}\}\subseteq M \text{ divergent in
}M, \text{ such that } \varphi (p_{n}) \ra x \text{ in }N\big\}.
\end{eqnarray}
By this definition, a map $\varphi : M \ra N$ whose image lie in a bounded, open
subset $D\subseteq N$ is proper in $D$ if and only if $\lim \varphi \subseteq
\partial D$.}. Regarding properness, we briefly recall the main achievements:
\begin{itemize}
\item[$(i)$] with a highly nontrivial refinement of Nadirashvili's technique, F.
Martin and S. Morales in \cite{pacomartin} proved that, for each convex domain
$D \subseteq \mathbb{R}^{3}$ not necessarily bounded or smooth, there exists a
complete  minimal disk $B \subseteq \C$ properly immersed into $D$. Later on, M.
Tokuomaru in \cite{tokuomaru} constructed a complete minimal annulus which is
proper in the unit ball of $\R^3$;
\item[$(ii)$] in \cite{pacomartin2}, Martin-Morales improved the results in
\cite{pacomartin}  showing that, if $D$ is a bounded strictly convex domain of
$\mathbb{R}^{3}$ with $\partial D$ is of class $C^{2,\alpha}$, then there exists
a complete,  minimal disk properly immersed into $D$ whose limit set is close to
a prescribed Jordan curve\footnote{We recall that a Jordan curve is defined as a
continuous embedding of $I \subseteq \R$ (or $I\subseteq \esse^1$) in $\R^3$.}
on $\partial D$;
\item[$(iii)$] Alarcon, Ferrer and  Martin in \cite{alarcon-Ferrer-Martin-GAFA}
extended the results in \cite{pacomartin} and \cite{tokuomaru} from disks and
annuli to open surfaces $M$  with any finite topology;
\item[$(iv)$] improving on \cite{pacomartin2}, Ferrer, Martin and Meeks in
\cite{ferrer-martin-meeks} showed that if $D \subset \R^3$ is bounded, convex
and smooth, then every open surface $M$ with finite topology can be properly and
minimally immersed into $D$ in such a way that the limit sets of the ends are
disjoint compact connected subsets of $\partial D$. Furthermore (see Proposition
1 in \cite{ferrer-martin-meeks}), they proved that, for every convex, open set
$D$ and every non-compact, orientable surface $M$, there exists a complete,
proper minimal immersion $\varphi : M \ra D$ such that $\lim \varphi \equiv
\partial D$.
\end{itemize}

A parallel line of research focused on the problem of controlling the size of
$\lim \varphi$ from the measure-theoretic point of view. In this respect,
\begin{itemize}
\item[$(v)$] Martin and Nadirashvili, \cite{martin-nadirashvili}, constructed
complete minimal immersions $\varphi\colon B \rightarrow \R^{3}$ of the unit
disk $B \subseteq \C$ admitting continuous extensions to the closed disk
$\overline{\varphi}\colon \overline{B}\rightarrow \mathbb{R}^{3}$ in such a way
that $\overline{\varphi}_{\vert \partial B}\colon \partial B = \mathbb{S}^1\to
\overline{\varphi}(\mathbb{S}^1)$ is an homeomorphism and
$\overline{\varphi}(\mathbb{S}^{1})$ is a non-rectifiable Jordan curve of
Hausdorff dimension $\dim_\haus(\overline{\varphi}(\mathbb{S}^{1})) = 1$.
Moreover, they showed that the set of Jordan curves
$\overline{\varphi}(\mathbb{S}^{1})$ constructed via this procedure
is dense in the space of Jordan curves of $\mathbb{R}^{3}$ with respect to the
Hausdorff metric.
\end{itemize}
In this paper, we address Yau's question of deciding whether the spectrum of
bounded, minimal surfaces is discrete or not, and we provide a sharp, general
criterion that applies to each of the examples in $(i), \ldots, (v)$, as well as
to many other minimal surfaces. The first answer to this question was given in
Bessa-Jorge-Montenegro in \cite{bessa-Jorge-montenegro-JGA}, where they proved
that the spectrum of a complete minimal surface properly immersed into a ball of
$\mathbb{R}^{3}$ is always discrete. Despite the generality of this result,
there is the technical unpleasant drawback that the possible convex ambient
domains $D \subset \R^3$ are restricted to balls. Much more important, their
approach uses in a crucial way the properness condition, and cannot be
generalized to deal with non-proper immersions. On the other hand, it is
reasonable to guess that a well-behaved limit set, close (loosely speaking) to
a smooth curve as in $(v)$, could imply that $\Delta$ has discrete spectrum on
$M$. In some
sense, we could say that the minimal surface is not too far from a compact
surface with boundary. Indeed, it is not necessary that $\lim\varphi$ resembles
a curve from the measure-theoretic point of view, but a much more general
condition is sufficient:

\begin{theorem}\label{teo_maindim2}

Let $\varphi \colon M^2 \to D \subset N$ be a minimal immersion into an open,
bounded, $2$-convex subset of a Cartan-Hadamard manifold $N$. Set $\Psi(t) =
t^2|\log t|$. If the $\Psi$-Hausdorff measure of $\lim\varphi \cap D$ satisfies
$\haus_\Psi(\lim\varphi \cap D) = 0$, then the spectrum of $-\Delta$ is discrete.
\end{theorem}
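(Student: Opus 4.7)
The plan rests on three ingredients: Persson's characterization of the essential spectrum; a Hardy-type inequality near $\partial D$ issued from the $2$-convexity of $D$; and an interior capacity estimate near $\lim\varphi\cap D$ encoding the $\Psi$-Hausdorff hypothesis. By Persson, discreteness of $-\Delta$ on $M$ is equivalent to $\lambda^*(M\setminus K_j)\to+\infty$ along a compact exhaustion $\{K_j\}$, where $\lambda^*$ denotes the bottom of the Dirichlet spectrum. Given $R>0$, I fix small $\delta,\eta>0$ (to be chosen in terms of $R$) and set $E_\delta=\{d_N(\cdot,\partial D)<\delta\}$, $F_\eta=\{d_N(\cdot,\lim\varphi\cap D)<\eta\}$. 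From the very definition of the limit set, $K:=\varphi^{-1}(\overline{D}\setminus(E_\delta\cup F_\eta))$ is compact in $M$, and any $u\in C_c^\infty(M\setminus K)$ satisfies $\varphi(\supp u)\subseteq E_\delta\cup F_\eta$. A partition of unity on $N$ subordinate to $\{E_{2\delta},F_{2\eta}\}$ splits $u=u_\partial+u_{\mathrm{int}}$, so the problem reduces to a Poincaré-type estimate for each piece, with Poincaré constants tending to $+\infty$ as $\delta,\eta\downarrow 0$.

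For $u_\partial$, the $2$-convexity of $D$ provides $b\in C^2(\overline{D})$ with $b\le 0$, $b=0$ on $\partial D$, $b\asymp -d_N(\cdot,\partial D)^2$ near $\partial D$, and the sum of the two smallest eigenvalues of $\Hess b$ bounded below by some $c_0>0$. Since $\dim M=2$ and $\varphi$ is minimal, $\Delta_M(b\circ\varphi)\ge c_0$. A standard Hardy manipulation (testing $\Delta_M\log|b\circ\varphi|$ against $u_\partial^2$) then yields $\int_M|\nabla u_\partial|^2\ge (C/\delta^2)\int_M u_\partial^2$, which diverges as $\delta\to 0$.

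The interior piece is the core of the proof and is where the Hausdorff hypothesis enters. Given $\varepsilon>0$, I cover $\lim\varphi\cap D$ by balls $\{B_N(x_i,r_i)\}$ with $\sum_i r_i^2|\log r_i|<\varepsilon$. Because $N$ is Cartan--Hadamard, Hessian comparison gives $\Hess d_N^2(\cdot,x_i)\ge 2\langle\,,\,\rangle$; combined with minimality of $\varphi$ and $\dim M=2$, this yields $\Delta_M(d_N^2(\varphi,x_i))\ge 4$, whence $\log d_N(\varphi,x_i)$ is subharmonic on $M\setminus\varphi^{-1}(x_i)$ (this step crucially requires $\dim M=2$). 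On each annular shell $\{r_i\le d_N(\varphi,x_i)\le\sqrt{r_i}\}$ I assemble a logarithmic cutoff $\rho_i$ equal to $0$ inside $B_N(x_i,r_i)$ and $1$ outside $B_N(x_i,\sqrt{r_i})$, and set $\rho:=\prod_i\rho_i$. Combining the subharmonicity of $\log d_N(\varphi,x_i)$ with the monotonicity formula for $2$-dimensional minimal surfaces and a Caccioppoli-type estimate, I expect each cutoff to contribute Dirichlet energy bounded by $Cr_i^2|\log r_i|$, so that $\int_M|\nabla\rho|^2\le C\varepsilon$. The function $(1-\rho)u_{\mathrm{int}}$ has compact support in $M$ away from $\lim\varphi$; enlarging $K$, it vanishes, so $u_{\mathrm{int}}=\rho u_{\mathrm{int}}$ and the energy bound converts into the required Poincaré inequality upon choosing $\varepsilon=\varepsilon(R)$ small enough.

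The substantive difficulty lies entirely in the interior capacity estimate: one must translate a purely \emph{extrinsic} covering condition $\sum_i r_i^2|\log r_i|<\varepsilon$ into an \emph{intrinsic} Dirichlet bound on the (possibly infinite-area) minimal surface $M$, having only the \emph{lower} density provided by the monotonicity formula. The gauge $\Psi(t)=t^2|\log t|$ is the critical one for this pairing: the $t^2$ factor mirrors the dimension of $M$ through monotonicity, while the $|\log t|$ factor reflects the logarithmic, conformal, $2$-dimensional character of capacity inherited from the subharmonicity of $\log d_N$. Making these two ingredients cooperate quantitatively, without any global upper area bound and uniformly in the (possibly countable) cover, is the main technical hurdle of the proof.
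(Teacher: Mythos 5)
Your proposal goes through Persson's formula, a boundary Hardy inequality from $2$-convexity, and an interior capacity bound near $\lim\varphi\cap D$, aiming at a direct Poincar\'e estimate. The paper takes a fundamentally different route: Persson's formula is combined with \emph{Barta's inequality}, and the work is shifted to building an explicit positive supersolution $w$ on $M\setminus K$ with $-\Delta w/w$ large. Concretely, for each ball $B_{\eps_j}(x_j)$ covering $\lim\varphi\cap D$, the paper constructs (Lemma \ref{basicconstr}) a non-negative, strictly subharmonic function $u_j=g(\rho_{x_j}\circ\varphi)$ with $\|u_j\|_\infty\le C\,\Psi(\eps_j)$ and $\Delta u_j\ge(\theta+1)/2$ on $\varphi^{-1}(B_j)$; then $w=\sum_j(2\|u_j\|_\infty-u_j)-u_\infty$ (with $u_\infty$ built from the $2$-convexity function $F$) is positive, superharmonic, and small. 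This makes the mechanism purely \emph{pointwise}, so no intrinsic integral estimate on $M$ is ever needed.

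That distinction is where your argument has a genuine gap. To establish $\int_M|\nabla\rho_i|^2\lesssim r_i^2|\log r_i|$ for your logarithmic cutoffs you need an \emph{upper} bound for the area of $\varphi(M)$ in the extrinsic annuli $\{r_i\le d_N(\cdot,x_i)\le\sqrt{r_i}\}$. The monotonicity formula only provides a \emph{lower} density bound, and without properness or any area hypothesis the intrinsic area piling up near $\lim\varphi$ can be arbitrarily large (indeed, in some of the examples $(i)$--$(v)$ the area is infinite). You acknowledge this as ``the main technical hurdle''; it is in fact the crux, and the Dirichlet-energy version of the capacity estimate does not follow from the extrinsic covering hypothesis $\sum_i\Psi(r_i)<\eps$. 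The paper's Barta strategy is designed precisely to evade this: the bound $\|u_j\|_\infty\le C\Psi(\eps_j)$ in Lemma \ref{basicconstr} is an $L^\infty$ estimate of a fixed function of the extrinsic distance, completely insensitive to how much of $M$ accumulates near $x_j$.

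There is a second, independent problem in the interior step. Your cutoff $\rho$ equals $0$ near each $x_i$ and $1$ far away, so $(1-\rho)$ is supported in $\varphi^{-1}\bigl(\bigcup_i B_N(x_i,\sqrt{r_i})\bigr)$, which contains exactly the non-compact part of $M$ accumulating onto $\lim\varphi\cap D$. The claim that ``$(1-\rho)u_{\mathrm{int}}$ has compact support in $M$ away from $\lim\varphi$'' is therefore incorrect, and the reduction $u_{\mathrm{int}}=\rho u_{\mathrm{int}}$ cannot be obtained by enlarging $K$. Even granting the unproved energy bound $\int|\nabla\rho|^2\le C\eps$, the passage from a small-capacity statement to the required lower bound for the Rayleigh quotient of \emph{all} compactly supported $u$ in $\varphi^{-1}(F_\eta)$ is not supplied. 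In short, you have correctly identified the gauge $\Psi(t)=t^2|\log t|$ as the critical one and the log-conformal $2$-dimensional structure behind it, but the mechanism that makes the theorem work in the paper is Barta's pointwise inequality with explicit extrinsic supersolutions, not an intrinsic capacity/Poincar\'e estimate.
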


\begin{remark}\label{rem_spectrum}
\emph{We stress that no geodesic completeness of $M$ is required in Theorem
\ref{teo_maindim2}. When $M$ is incomplete, $-\Delta$ may fail to be essentially
self-adjoint on $C^\infty_c(M)$, in sharp contrast with the complete case where the self-adjointness of $-\Delta$ is automatic. For this reason, we agree on always considering the Friedrichs extension of $\Delta$, that is, the
unique self-adjoint extension of $(-\Delta, C^\infty_c(M))$ whose domain lies in
that of the closure of the quadratic form $Q(u,v) = (-\Delta u,v)_{L^2}$, for
$u,v \in C^\infty_c(M)$.
}
\end{remark}

\begin{remark}
\emph{We also underline that there is no codimension restriction, that is, $N$
may be of any dimension.}
\end{remark}

Later, we will recall the notion of $2$-convexity for subsets with $C^2$
boundary, and the definition of Hausdorff measure $\haus_\Psi$. Here, we just
observe that $2$-convexity relaxes standard convexity, so $D$ can be
any bounded strictly convex subset of $M$ with $C^2$-boundary. As for
$\haus_\Psi$, condition $\haus_\Psi(\lim\varphi \cap D) = 0$ is satisfied, for
instance, whenever the Hausdorff dimension of $\lim\varphi \cap D$ is strictly
less than $2$. It is worth to underline that we are considering only the part of
$\lim\varphi$ \emph{inside} $D$: there is no requirement on the portion of
$\lim\varphi$ contained in $\partial D$. As an immediate corollary, we have the
following

\begin{corollary}

All the examples of complete, bounded minimal surfaces constructed in $(i),
\ldots, (v)$ $($if the convex sets are bounded and $C^2)$ have discrete
spectrum.

\end{corollary}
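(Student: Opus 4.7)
\emph{Proof plan.} The plan is to read the corollary as a direct verification of the three hypotheses of Theorem~\ref{teo_maindim2} for each example. All constructions in $(i)$--$(v)$ produce a $2$-dimensional minimal immersion $\varphi\colon M^{2}\to D\subseteq \R^{3}$, and $\R^{3}$ is Cartan--Hadamard; by assumption the relevant convex domains $D$ are bounded and $C^{2}$, and strict convexity is stronger than the $2$-convexity required by Theorem~\ref{teo_maindim2}. What remains is the Hausdorff-measure condition $\haus_{\Psi}(\lim\varphi\cap D)=0$ with $\Psi(t)=t^{2}|\log t|$.

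The examples $(i)$--$(iv)$ all construct $\varphi$ \emph{properly} immersed into $D$. By the characterization of properness recalled in the footnote, $\lim\varphi\subseteq\partial D$, so $\lim\varphi\cap D=\emptyset$ and the measure condition holds vacuously. No further work is needed in those cases.

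Only the Martin--Nadirashvili example $(v)$ requires real argument, and I would handle it as follows. No specific convex target is prescribed in $(v)$, but the image $\overline\varphi(\overline B)$ is a compact subset of $\R^{3}$, so I take $D$ to be any open Euclidean ball strictly containing it; this $D$ is smooth and strictly convex. A divergent sequence in $B$ must accumulate on $\partial B=\esse^{1}$, and the continuous extension $\overline\varphi$ forces $\lim\varphi\subseteq\overline\varphi(\esse^{1})\subset D$; by hypothesis $\overline\varphi(\esse^{1})$ has Hausdorff dimension $1$. To upgrade this to $\haus_{\Psi}(\overline\varphi(\esse^{1}))=0$, I fix any $s\in(1,2)$ and note that $\Psi(t)/t^{s}=t^{2-s}|\log t|\to 0$ as $t\to 0^{+}$, so $\Psi(t)\le t^{s}$ for all sufficiently small $t$. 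Since $\haus^{s}$ vanishes on every set of dimension strictly less than $s$, one covers the Jordan curve by balls of arbitrarily small radii $r_{i}$ with $\sum r_{i}^{s}$ arbitrarily small; the corresponding $\Psi$-sum is then dominated by the $s$-sum, and refining the cover yields $\haus_{\Psi}=0$.

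The main obstacle---such as it is---is less a difficulty than a conceptual point to internalize: Theorem~\ref{teo_maindim2} measures only $\lim\varphi\cap D$, not all of $\lim\varphi$. This is exactly what makes $(i)$--$(iv)$ trivial and what lets one safely enlarge the ambient ball in $(v)$ without any worry about how $\varphi$ approaches the enclosing sphere. Once this is appreciated, the corollary reduces to the gauge-function comparison $\Psi(t)\lesssim t^{s}$ for $s\in(1,2)$, which is the only piece of actual computation.
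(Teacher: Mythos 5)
Your proposal is correct and is essentially the expansion of the paper's ``immediate corollary'': for $(i)$--$(iv)$ properness gives $\lim\varphi\cap D=\emptyset$, and for $(v)$ the limit set lies in the dimension-$1$ Jordan curve $\overline\varphi(\esse^1)$, so the gauge comparison $\Psi(t)=t^2|\log t|\le t^s$ (for $s\in(1,2)$ and small $t$) yields $\haus_\Psi=0$ — the very observation the paper itself makes when it notes that $\haus_\Psi(\lim\varphi\cap D)=0$ holds whenever $\dim_\haus(\lim\varphi\cap D)<2$. Nothing is missing, and the choice of an enclosing Euclidean ball as the $2$-convex domain in case $(v)$ is exactly the right fix for the fact that $(v)$ prescribes no ambient convex set.
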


The fact that the completeness of $M$ is irrelevant enables us to apply our
result to solutions of the Plateau problem for arbitrary Jordan curves, whose
existence is granted by R. Douglas in \cite{douglas} (see also the treatises
\cite{courant} and \cite{dierkeseco, dierkeseco2}, as well as \cite{jost},
\cite{tomitromba}):

\begin{corollary}\label{cor_plateau}

Let $\Gamma \colon \esse^1 \ra \R^3$ be a Jordan curve. If
$\haus_\Psi(\Gamma(\esse^1)) = 0$, where $\Psi(t) = t^2 |\log t|$, then every
minimal surface spanning $\Gamma$ has discrete spectrum.
\end{corollary}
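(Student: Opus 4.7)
\emph{Step 1: setup.} By classical Plateau theory, a minimal surface spanning $\Gamma$ can be parametrized by a non-constant conformal harmonic map $\varphi \colon \overline{\mathbb{D}} \to \R^{3}$, continuous up to the boundary, with $\varphi|_{\partial\mathbb{D}}$ a weakly monotone parametrization of $\Gamma(\esse^{1})$. Applying the maximum principle to the coordinate functions $\langle \varphi, \nu\rangle$ for each unit vector $\nu \in \R^{3}$ yields the convex-hull property $\varphi(\overline{\mathbb{D}}) \subseteq \overline{\mathrm{co}}(\Gamma(\esse^{1}))$. Fix any open Euclidean ball $D \subset \R^{3}$ strictly containing this compact convex hull; then $D$ is bounded, strictly convex and smooth, hence $2$-convex, and the ambient $\R^{3}$ is Cartan-Hadamard, so the hypotheses on $(D,N)$ in Theorem \ref{teo_maindim2} are satisfied.

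\emph{Step 2: removing branch points.} Interior branch points of $\varphi$ are common zeros of its holomorphic Weierstrass data and therefore form a discrete, at most countable subset $\mathcal{B} \subset \mathbb{D}$. Set $M = \mathbb{D} \setminus \mathcal{B}$. Endowed with the pull-back metric $\varphi^{*}\metric$, $M$ is a smooth (incomplete) $2$-manifold and $\varphi|_{M}\colon M \to D$ is a genuine minimal immersion; by Remark \ref{rem_spectrum} its Friedrichs Laplacian is the correct self-adjoint operator representing the spectrum of the spanning minimal surface.

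\emph{Step 3: limit set and conclusion.} Any sequence $\{p_{n}\} \subset M$ without accumulation in $M$ subconverges in the compact $\overline{\mathbb{D}}$ to some $p_{\infty} \in \partial \mathbb{D} \cup \mathcal{B}$; by continuity of $\varphi$ on $\overline{\mathbb{D}}$, the image $\varphi(p_{n})$ subconverges to $\varphi(p_{\infty}) \in \Gamma(\esse^{1}) \cup \varphi(\mathcal{B})$. Hence $\lim\varphi \subseteq \Gamma(\esse^{1}) \cup \varphi(\mathcal{B})$. By hypothesis $\haus_{\Psi}(\Gamma(\esse^{1})) = 0$, and $\varphi(\mathcal{B})$ is at most countable and thus $\haus_{\Psi}$-null; therefore $\haus_{\Psi}(\lim\varphi \cap D) = 0$, and Theorem \ref{teo_maindim2} gives the asserted discreteness of the spectrum.

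\emph{Main obstacle.} The principal delicate point is the treatment of branch points: one must pass from $\overline{\mathbb{D}}$ to the punctured surface $M$, work with the Friedrichs extension on an incomplete Riemannian manifold, and use that the images of branch points contribute only countably many points---hence automatically $\haus_{\Psi}$-null ones---to $\lim\varphi$. Every other ingredient (continuous extension to the boundary, the convex-hull property, and the discreteness of branch points) is standard in Plateau theory, so the corollary follows immediately once this framework is set up.
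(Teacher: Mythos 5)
Your proof is correct and essentially follows the route the paper has in mind: the paper presents Corollary \ref{cor_plateau} as an immediate consequence of Theorem \ref{teo_maindim2} once one notices that geodesic completeness is not required, since Plateau solutions (having boundary) are incomplete. Your steps fill in the details: the convex hull property locates the solution inside a Euclidean ball $D$, the continuity up to $\partial \mathbb{D}$ identifies the limit set with $\Gamma(\esse^1)$ (plus possibly images of branch points), and the hypothesis $\haus_\Psi(\Gamma(\esse^1))=0$ then puts you in the hypotheses of Theorem \ref{teo_maindim2}.

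The one place where you go genuinely beyond what the paper spells out is the treatment of interior branch points. Since Theorem \ref{teo_maindim2} requires $\varphi$ to be an immersion, a branched Plateau solution falls outside its literal scope; the paper glosses over this, presumably tacitly assuming unbranched solutions (as guaranteed by Osserman--Gulliver--Alt in the area-minimizing case, but not in general). Your fix --- remove the discrete branch locus $\mathcal{B}$, observe that the resulting $M = \mathbb{D}\setminus\mathcal{B}$ is now an honest incomplete immersed surface, interpret ``spectrum'' as the Friedrichs extension exactly per Remark \ref{rem_spectrum}, and absorb $\varphi(\mathcal{B})$ into $\lim\varphi\cap D$ noting that a countable set is $\haus_\Psi$-null --- is the natural way to make the corollary rigorous for arbitrary (possibly branched) spanning surfaces. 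This is a refinement rather than a different argument; the core mechanism (incomplete $M$, small limit set, Theorem \ref{teo_maindim2}) is the same. Two small remarks: you implicitly restrict to disk-type solutions, whereas the paper's references (e.g.\ Tomi--Tromba) suggest higher-genus spanning surfaces are also intended --- your argument generalizes verbatim to any compact bordered surface; and it is worth noting, as a sanity check on the Friedrichs convention, that in dimension two a discrete set has zero capacity, so removing $\mathcal{B}$ and taking the Friedrichs extension produces the natural $L^2$-Laplacian of the branched surface.
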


\begin{remark}
\emph{It is known (see \cite{osgood}) that there exist Jordan curves whose image have non-zero $2$-dimensional Hausdorff measure, whence the measure condition in Corollary \ref{cor_plateau} is not automatically satisfied.
}
\end{remark}

There is a trivial reason why the estimate in Theorem \ref{teo_maindim2} is
sharp, and it is the presence of infinite-sheet coverings.

\begin{example}

\emph{Consider a bounded, complete minimal annulus $f \colon A \ra \R^3$ (either
the proper one constructed in \cite{tokuomaru}, or the one in
\cite{martinmorales}). Then, taking the universal covering $\pi\colon M \ra A$,
$\varphi = f \circ \pi$ is a bounded, complete minimal surface with non-empty
essential spectrum, as it is for every infinite-sheet covering (for instance,
one can note that $M$ has the ball property, see Definition \ref{def_ball} and
Corollary \ref{cor_disk} below). Clearly, $\dim_\haus (\lim \varphi) \ge 2$
since, by construction, $\lim \varphi \supseteq f(A)$.
}
\end{example}

As we shall see, Theorem \ref{teo_maindim2} is a particular case of a more
general result, Theorem \ref{teo_maingenerale} below, where we deal with
complete submanifolds $\varphi \colon M^m \to  N^n$ immersed with  sufficiently
small mean curvature in an $m$-convex and regular ball $B_R \Subset N$. Thus,
the property described in Theorem \ref{teo_maindim2} does not, indeed,
exclusively pertain to the realm of minimal surfaces. For technical reason, we
postpone the general statement of our main theorem to Section
\ref{sec_statement}.
\begin{remark}
\emph{As a consequence of Theorem \ref{teo_main}, an analogous of
Corollary \ref{cor_plateau} holds when the mean curvature is non-zero and
sufficiently small. In this case, the existence of disks solving Plateau's
problem is granted by H. Werner in \cite{werner}. We leave the correspondent
statement to the interested reader.}
\end{remark}
To inspect more closely the sharpness of our results, Section \ref{sec_sharpness} is devoted to find sufficient conditions for a manifold to have non-empty essential spectrum. These conditions will be used to investigate minimal submanifolds subjected to some general extrinsic constrain, such as, for instance, Jorge-Xavier minimal surface in a slab. In particular, we shall concentrate on criteria that do not involve the curvature of the submanifold, as this is often a hardly available information: for example, it seems extremely difficult to control the curvature when exploiting the techniques used for the examples in $(i),\ldots , (v)$. We are led to the following

\begin{definition}\label{def_ball}
A Riemannian manifold $(M, \metric)$ is said to have the \emph{\textbf{ball property}} if there exists $R>0$ and a
collection of disjoint balls $\{B_R(x_j)\}_{j=1}^{+\infty}$ of radius $R$ for
which, for some constants $C>0$, $\delta \in (0, 1)$ possibly depending on $R$,
\begin{equation}\label{desig1}
\vol \big(B_{\delta R}(x_j)\big) \ge C^{-1} \vol\big(B_R(x_j)\big) \qquad
\forall \, j \in \N
\end{equation}
\end{definition}

\begin{remark}
 \emph{Note that \eqref{desig1} is \emph{not} a doubling condition, as $C$ may depend on $R$.
}
\end{remark}

\begin{proposition}\label{lem_crite}
If a Riemannian manifold has the ball property (with parameters $R, \delta, C$), then
\begin{equation}\label{ifsigmaess}
\inf \sigma_\ess(-\Delta) \le \frac{C}{R^2(1-\delta)^2}.
\end{equation}
\end{proposition}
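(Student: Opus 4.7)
The plan is to build an infinite sequence of disjointly supported test functions—one per ball $B_R(x_j)$—all of whose Rayleigh quotients are bounded by $C/[(1-\delta)^2 R^2]$, and then invoke Persson's formula for the Friedrichs extension to conclude.

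First I would introduce, on each ball of the sequence, the standard Lipschitz radial cutoff
\begin{equation*}
u_j(x) = \min\!\left\{1, \frac{\bigl(R - d(x, x_j)\bigr)_+}{(1-\delta)R}\right\},
\end{equation*}
so that $u_j \equiv 1$ on $B_{\delta R}(x_j)$, $\mathrm{supp}\, u_j \subseteq \overline{B_R(x_j)}$, and $|\nabla u_j| \le 1/[(1-\delta)R]$ almost everywhere. Thanks to the ball property \eqref{desig1},
\begin{equation*}
\int_M |\nabla u_j|^2 \le \frac{\vol\bigl(B_R(x_j)\bigr)}{(1-\delta)^2 R^2}, \qquad \int_M u_j^2 \ge \vol\bigl(B_{\delta R}(x_j)\bigr) \ge C^{-1}\vol\bigl(B_R(x_j)\bigr),
\end{equation*}
and therefore the Rayleigh quotient of $u_j$ satisfies
\begin{equation*}
\frac{\int_M |\nabla u_j|^2}{\int_M u_j^2} \le \frac{C}{(1-\delta)^2 R^2}.
\end{equation*}
A standard mollification of $u_j$ (for example, via cutoffs of local harmonic-capacity potentials, or by convolving in charts with a partition of unity) produces $v_j \in C^\infty_c(M)$ still supported in $B_R(x_j)$ and with Rayleigh quotient bounded by $C/[(1-\delta)^2 R^2] + \eps$ for any prescribed $\eps>0$.

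To conclude, I would use the Persson-type characterization of the bottom of the essential spectrum for the Friedrichs extension,
\begin{equation*}
\inf \sigma_\ess(-\Delta) \;=\; \sup_{K \Subset M}\; \inf \left\{ \frac{\int_M |\nabla u|^2}{\int_M u^2} \,:\, u \in C^\infty_c(M \setminus K),\; u \not\equiv 0 \right\}.
\end{equation*}
Since the balls $B_R(x_j)$ are pairwise disjoint and $M$ is separable, for any compact $K \Subset M$ only finitely many of them meet $K$; hence there exists an index $j=j(K)$ with $B_R(x_j) \cap K = \emptyset$, and the corresponding $v_j$ lies in $C^\infty_c(M \setminus K)$ with Rayleigh quotient at most $C/[(1-\delta)^2 R^2] + \eps$. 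Taking first the infimum over such test functions, then the supremum over $K$, and finally letting $\eps \downarrow 0$ yields \eqref{ifsigmaess}.

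The only nontrivial ingredient is the Persson formula itself in this nonstandard setting (possibly incomplete $M$, Friedrichs extension); once that is justified by noting that $C^\infty_c(M)$ is a form-core for the Friedrichs extension by construction, the remaining steps are routine. I would therefore expect the subtle point to be checking that the Persson formula holds verbatim for the Friedrichs realization — i.e.\ that one may restrict the variational principle to smooth, compactly supported functions outside compact sets — which follows from the density of $C^\infty_c(M)$ in the form domain and a standard Glazman-lemma argument producing an orthonormal Weyl sequence out of our disjointly supported $v_j$'s.
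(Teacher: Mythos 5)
Your test functions and the Rayleigh--quotient bound $C/[(1-\delta)^2R^2]$ are exactly the ones the paper uses, so the analytic core matches. Where you diverge is in the final step: you go through Persson's formula, whereas the paper uses the min-max principle (Glazman's lemma) directly. The paper's argument is: the $\phi_j$ are disjointly supported, hence linearly independent, so they span an infinite-dimensional subspace of the form domain on which the form $I_\lambda(u,u)=\int|\nabla u|^2-\lambda\int u^2$ is negative for $\lambda>\lambda^*$; by min-max this forces infinitely many eigenvalues of $-\Delta$ below $\lambda$, whose accumulation point gives a point of $\sigma_\ess$ below $\lambda$, and then one lets $\lambda\downarrow\lambda^*$. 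No mention of compact exhaustion is needed.

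Your Persson-based route, on the other hand, relies on the claim that for every compact $K\Subset M$ at least one ball $B_R(x_j)$ is disjoint from $K$ (you assert the stronger statement that only finitely many balls meet $K$, attributing it to separability, but separability is not the relevant ingredient). This is where there is a genuine gap. For a \emph{complete} manifold the claim is clear, since $\overline{B_R(K)}$ is compact and the centers $x_j$ cannot accumulate in $M$ (disjointness would fail). But the paper deliberately allows $M$ to be incomplete (see Remark \ref{rem_spectrum}); in that setting $\overline{B_R(K)}$ need not be compact, the $x_j$ may escape toward an ideal boundary at finite distance while every $B_R(x_j)$ still touches $K$, and minimizing geodesics between $y_j\in B_R(x_j)\cap K$ and $x_j$ need not even exist. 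So the step "there exists $j$ with $B_R(x_j)\cap K=\emptyset$" does not follow as stated. Your closing remark that one could instead run a ``Glazman-lemma argument producing an orthonormal Weyl sequence out of the disjointly supported $v_j$'s'' is precisely the fix: that is the paper's argument, and it bypasses the compact-exhaustion issue entirely. If you prefer to keep the Persson route, you should add a hypothesis (e.g.\ completeness, or properness of the distance function on $M$) or supply a separate argument for the claim.

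A minor additional remark: the smoothing step $u_j\rightsquigarrow v_j\in C^\infty_c$ is not needed, since the Lipschitz functions $u_j$ already lie in $H^1_0(M)$, which is the form domain of the Friedrichs extension, and the min-max principle (as well as Persson's formula in the form \eqref{persson}) is insensitive to replacing $C^\infty_c$ by its $H^1$-closure.
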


The ball property is reasonably easy to check, since it only requires a $C^0$ control on the metric in some region of $M$, and is very suited to deal with surfaces constructed via developments of Jorge-Xavier original technique in \cite{jorge-xavier-annals}. For example, we easily prove the next

\begin{proposition}\label{prop_jx}
For a suitable choice of the parameters, the Jorge-Xavier complete minimal
surface $(M^2, \di s^2)$ in a slab of $\R^3$, constructed in
\cite{jorge-xavier-annals}, satisfies $\inf \sigma_\ess(-\Delta)=0$.
\end{proposition}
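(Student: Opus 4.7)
The strategy is to invoke Proposition \ref{lem_crite} for a suitable Jorge--Xavier surface by producing, for every $R>0$, an infinite family of pairwise disjoint intrinsic balls of radius $R$ on which the induced metric is essentially flat Euclidean. The ball property will then hold with $\delta=1/2$ and a uniform constant $C$, giving the bound $\inf\sigma_\ess(-\Delta)\le 4C/R^2$; letting $R\to\infty$ yields $\inf\sigma_\ess(-\Delta)=0$.

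Recall that the minimal surface of \cite{jorge-xavier-annals} is an immersion $\varphi\colon\mathbb{D}\to\R^3$ with induced conformal metric $ds^2=\lambda^2|dz|^2$ produced via Weierstrass data $(g,f)$ obtained by iterated Runge approximations on a sequence of concentric annuli $A_k\subset\mathbb{D}$ shrinking toward $\partial\mathbb{D}$, together with ``labyrinths'' $L_k\subset A_k$. The construction guarantees that:
\begin{itemize}
\item[(a)] outside small neighborhoods of the $L_k$, the conformal factor $\lambda$ equals a prescribed positive constant $\mu_k$ up to a controllable Runge error, so $\lambda\simeq\mu_k$ on a large portion of $A_k\setminus L_k$;
\item[(b)] on $L_k$ the factor $\lambda$ is so large that every path escaping to $\partial\mathbb{D}$ crosses infinitely many $L_k$ and therefore has infinite intrinsic length;
\item[(c)] the third coordinate $x_3$, via the Weierstrass formula, remains uniformly bounded, so $\varphi(\mathbb{D})$ lies in a slab of $\R^3$.
\end{itemize}

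The critical freedom is the sequence $\{\mu_k\}$. Choose it together with Euclidean disks $D_k\subset A_k\setminus L_k$ of Euclidean radius $\rho_k$ produced by the construction, so that
\[
R_k \;:=\; \mu_k\rho_k \;\longrightarrow\; +\infty.
\]
On each $D_k$ the metric is essentially $\mu_k^2|dz|^2$, hence almost isometric to a flat Euclidean disk of radius $R_k$. Letting $x_k$ be the center of $D_k$, for every $R\le R_k$ the intrinsic ball $B_R(x_k)$ is contained in $D_k$ and is approximately flat. Fix an arbitrary $R>0$: since $R_k\to\infty$, for every $k\ge k_0(R)$ we have $R_k\ge 2R$, so the balls $B_R(x_k)$ with $k\ge k_0$ are pairwise disjoint (as $D_k\subset A_k$) and essentially Euclidean. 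Up to negligible distortion this gives $\vol(B_{R/2}(x_k))\ge C^{-1}\vol(B_R(x_k))$ with $C$ uniform in $k$, thereby verifying the ball property. Proposition \ref{lem_crite} then yields $\inf\sigma_\ess(-\Delta)\le 4C/R^2\to 0$.

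The main obstacle is to show that the Runge scheme can accommodate an arbitrarily fast-growing sequence $\mu_k\to\infty$ without destroying completeness or the slab confinement. One prescribes the target $\mu_k$ on the bulk disks $D_k$ first, and only afterwards chooses the labyrinths $L_k$ and the Runge errors, fine enough so that the $\lambda$-spikes along $L_k$ still dominate the length of any divergent path (completeness is preserved) and the integrals defining $x_3$ remain bounded (slab confinement is preserved). Enlarging the $\mu_k$ only rescales the ``background'' metric on the disks $D_k$ and does not interfere with the labyrinth mechanism producing completeness or with the Runge smallness conditions enforcing the slab bound.
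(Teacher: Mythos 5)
Your overall strategy---invoke Proposition \ref{lem_crite} by finding, for each $R>0$, infinitely many disjoint intrinsic $R$-balls on which $\di s^2$ is conformal to $\di s_E^2$ by an almost-constant factor, then let $R\to\infty$---is exactly the paper's. The gap lies in where you place the balls. You take disks $D_k$ in the complement $A_k\setminus L_k$ of the labyrinths and assert in step (a) that there $\lambda$ is a prescribed near-constant $\mu_k$ controlled by Runge. The Jorge--Xavier construction does not give this. Runge's theorem is applied precisely to the compact labyrinth pieces $K_n$, and it is on $K_n$ that the holomorphic data $h$ is close to a constant $c_n$, making $\lambda=\tfrac12(|e^h|+|e^{-h}|)$ a large near-constant there (this is exactly what forces divergent paths to have infinite length). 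Outside the labyrinth, $\lambda$ satisfies only the trivial bound $\lambda\ge1$; it is neither approximately constant nor tied to any prescribed sequence $\mu_k$. Realizing (a) would require adding the $D_k$ to the compact sets on which Runge is invoked, a re-engineering of the whole scheme, and you neither carry this out nor verify that the modified Weierstrass data is still complete and stays in a slab (prescribing $\mu_k\to\infty$ on the bulk could in particular spoil the bound on the third coordinate, which you merely assert is preserved).

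The paper avoids all of this by placing the balls \emph{inside} the labyrinth pieces $K_n$, where the conformal control is already built in. There one has $C_n\,\di s_E\le \di s\le e^2C_n\,\di s_E$ with $C_n=\tfrac12(e^{c_n-1}+e^{-c_n-1})$, so an intrinsic ball $B_R(p_n)\subset K_n$ is sandwiched between Euclidean balls whose radii differ only by the fixed factor $e^2$; planar volume doubling then gives the ball property with $\delta=1/2$ and a constant $C$ independent of $n$ and of $R$. A suitable choice of $c_n$ relative to the widths $r_n$ of the $K_n$ makes $B_R(p_n)\subset K_n$ for all large $n$ while keeping $\sum_n r_ne^{c_n-1}$ divergent, so completeness survives, and disjointness of the balls is automatic since the $K_n$ are pairwise disjoint. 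You should redo your argument inside the $K_n$, using only the control the construction already provides, rather than positing new control in the bulk.
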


For more results and comments, we refer to Section \ref{sec_sharpness}. There, the reader can also find some open problems that we believe to be worth investigating.

\section{Setting, notations and preliminaries}
Let $M$, $N$ be connected (smooth) Riemannian manifolds of dimensions $m$ and
$n$ respectively and let $\varphi\colon M \rightarrow N$ be an isometric
immersion. In what follows, $\nabla \di \varphi$ will stand for the second
fundamental form of $\varphi$ and $H = \frac{1}{m} \mathrm{tr}(\nabla \di
\varphi)$ for its mean curvature. All the elements describing the Riemannian
structure of $N$ will be marked with a bar, so that, for instance, $\overline
\nabla$ and $\overline K$ will denote the Riemannian connection and the
sectional curvature of $N$. Having fixed $o \in N$, we will write $\overline
K_\rad(x)$ for the radial sectional curvature of $N$, that is, the restriction
of $\overline K(x)$ to the subset of $2$-planes at $x$ containing tangent
vectors of minimizing geodesics issuing from $o$. For $x,y \in N$, we define
$\rho(x,y)= \mathrm{dist}(x,y)$, and $\rho(x)= \mathrm{dist}(x,o)$ whenever the
second point is considered as fixed. The symbol $B_r(x)$ indicates the ball of
radius $r$ centered at $x$,
and we simply write $B_r$ when $x=o$. Similarly, for $A\subseteq N$ the symbol
$B_r(A)$ denotes the open set of points whose distance from $A$ is less than
$r$. We use $\R^+$ and $\R^+_0$ to denote, respectively, $(0,+\infty)$ and
$[0,+\infty)$.

Hereafter, we will consider a relaxed notion of convexity, called $j$-convexity,
which is widely used in the literature (for instance, see \cite{jorgetomi}).
\begin{definition}\label{def_convexity}
For an integer $j \le n$, We say that an open subset $D \subseteq N^n$ is
(strictly) $j$-convex if there exists $F \in C^2(N)$, satisfying the following
properties:
\begin{itemize}
\item[$(i)$] $F<0$ on $D$, $F=0$ on $\partial D$;
\item[$(ii)$] for every $p\in D$, denoting with $\lambda_1(p) \le \lambda_2(p)
\le \ldots \le \lambda_{n}(p)$ the eigenvalues of the Hessian $\nabla \di F$ at
$p$ written in increasing order, it holds $\lambda_1(p) +\ldots + \lambda_j(p)
>0$.
\end{itemize}
The set $D$ will be called uniformly convex with constant $c>0$ if at every
point $p \in D$ it holds
$\lambda_1(p)+\ldots+\lambda_j(p) \ge c$. For a uniformly $j$-convex subset $D$,
the pair $(F,c)$ will be called the data of uniform convexity of $D$.
\end{definition}

In particular, if $j=1$, the definition of $j$-convexity coincides with the
standard definition of (strict) convexity, and for this reason we simply say
that $D$ is convex. Moreover, we observe that $n$-convexity means that $D$ is
$F^{-1}((-\infty,0))$ for a strictly subharmonic function $F$. Since each
$\lambda_i$ is a Lipschitz function of $p \in N$, by a compactness argument the
properties of $j$-convexity and uniform $j$-convexity coincide for relatively
compact $D$.

\begin{example}\label{ex_bolle}
\emph{
Let $N$ be a complete Riemannian manifold, let $o \in N$ be a reference origin
and suppose that $\overline K_\rad(x) \le -G(\rho(x))$, for some $G \in
C^0(\R^+_0)$. Consider a solution $h(t)\in C^2(\R^+_0)$ of
\begin{equation}\label{defh}
\left\{\begin{array}{l}
   h''(t)-G(t)h(t)= 0, \\[0.1cm]
   h(0)=0, \quad h'(0)=1,
  \end{array}\right.
\end{equation}
and let $R$ be such that $h'>0$ on $[0,R]$. Suppose that $B_R \subseteq N$ does
not intersect the cut-locus $\cut(o)$, that is, that $B_R$ is a regular ball of
$N$. Then, by the Hessian comparison theorem (see for instance \cite{prs},
Theorem 2.3), for $x \in B_R$
$$
\nabla \di \rho \ge \frac{h'(\rho)}{h(\rho)} \Big( \metrict - \di \rho \otimes
\di \rho\Big), \qquad \rho = \rho(x).
$$
Having set
$$
f(t) = \int_0^t h(s) \di s, \qquad F(x) = f(\rho(x)) \quad \text{on } B_R,
$$
then by the chain rule
$$
\nabla \di F = f''(\rho) \di \rho \otimes \di \rho + f'(\rho) \nabla \di \rho
\ge h'(\rho) \metrict \ge \left[ \inf_{[0,R]}h'\right] \metrict,
$$
whence the ball $B_R \subseteq N$ is convex, with constant $c=\inf_{[0,R]}h'$.
For instance, if $G(t)=k\le 0$ is constant, then every ball centered at a pole
$o$ of $N$ is convex (we recall that a pole is a point $o \in N$ such that
$\cut(o)=\emptyset$, or equivalently such that $\exp_o: T_oN \ra N$ is a
diffeomorphism). The same happens when $k > 0$ whenever $B_R \subseteq N$ is a
regular ball of radius $R < \pi/2\sqrt{k}$. If $o$ is a pole for $N$, there are
some sufficient conditions ensuring that each $B_R$ is convex. For a general
case, we suggest the reader to consult Remark \ref{rem_ipoGh} below.
}
\end{example}

Before stating our result in its stronger form, we also recall some general
notions on Hausdorff measures. We follow the exposition in \cite{mattila},
Chapter 4, although with a different notation. According to Carath\'eodory
construction, we consider a function $\Psi \ge 0$ defined and continuous on some
right neighborhood $[0,2\delta_0)$ of zero, and such that $\Psi(0)=0$, together
with a family $\F$ of Borel subsets of $M$ satisfying the following property:
\begin{itemize}
\item[] For every $\delta \in (0, \delta_0)$, there exist $\{E_i\} \subseteq \F$
such that $M = \bigcup_{i=1}^{+\infty} E_i$ and $\diam(E_i) \le \delta$.
\end{itemize}
For each $\delta \in (0, \delta_0)$ and for every subset $A \subseteq M$, we set
\begin{equation}\label{defhaus}
\begin{array}{l}
\disp \haus_{\Psi, \delta}(A) = \disp \inf\left\{ \sum_{i=1}^{+\infty} \Psi
\big(\diam(E_i)\big) \ : \ \{E_i\} \subseteq \F, \  A \subseteq
\bigcup_{i=1}^{+\infty} E_i, \ \diam(E_i) \le \delta \right\} \\[0.5cm]
\disp \haus_\Psi(A) = \lim_{\delta \downarrow 0}\haus_{\Psi, \delta}(A) =
\sup_{\delta \in (0, \delta_0)} \haus_{\Psi, \delta}(A).
\end{array}
\end{equation}
If $\F \equiv \{\text{borel subsets of } M\}$,  then the resulting measure
$\haus_\Psi$ is a borel regular measure (\cite{mattila}, Theorem 4.2), and we
call it the Hausdorff measure related to $\Psi$. By Theorem 4.4 in
\cite{mattila}, the same $\haus_\Psi$ can be obtained if we restrict to the
subfamily $\F = \{\text{open subsets of } M\}$. The particular choice $\Psi(t) =
t^\beta$, for some fixed $\beta>0$, gives the standard Hausdorff measure
$\haus^\beta$ of order $\beta$, up to an unessential constant.

\begin{remark}\label{rem_haus}
\emph{If we restrict $\F$ to the subfamily of geodesic balls of $M$, the
resulting measure $\overline\haus_\Psi$ does not coincide, in general, with
$\haus_\Psi$ (see \cite{mattila}, Chapter 5). However, if for some constant
$C>0$ it holds
\begin{equation}\label{propper}
\Psi(2t) \le C\Psi(t)  \qquad \text{for } t \in (0, \delta_0),
\end{equation}
then $\haus_\Psi \le \overline \haus_\Psi \le C\haus_\Psi$. The first inequality
is obvious from definitions. To prove the second one, since every open set $E_j$
is contained in a ball $B_j$ of diameter $2\diam(E_j)$, for every covering
$\{E_j\}$ of $A\subseteq M$ with $\diam(E_j) < \delta$ it holds
$$
\sum_{i=1}^{+\infty} \Psi\big( \diam(E_j)\big) \ge \frac{1}{C}
\sum_{i=1}^{+\infty} \Psi\big( 2\diam(E_j)\big) = \frac{1}{C}
\sum_{i=1}^{+\infty} \Psi\big( \diam(B_j)\big).
$$
Now, taking the infimum, in the right hand-side, with respect to all covering
$\{B_j\}$ with balls of diameter less than $2\delta$, and then doing the same on
the left hand side, letting $\delta \downarrow 0$ we deduce the desired
$\overline \haus_\Psi \le C \haus_\Psi$.
}
\end{remark}

\section{The main theorem and its proof}\label{sec_statement}

We are ready to state our main result in its general form.

\begin{theorem}\label{teo_maingenerale}

Let $\varphi \colon M^m \ra D \subset N^n$ be an isometric immersion into a
bounded, $m$-convex open subset $D$ of a Cartan-Hadamard manifold $N$
satisfying $\overline K_\rad \le -B^2$, for some $B \ge 0$. Let $(F, c)$ be the
uniform convexity data of $D$ as in Definition \ref{def_convexity}, and let $R_0
> \frac 12 \mathrm{diam}(D)$. Set
\begin{equation}\label{exmut}
\mu(t) = \left\{ \begin{array}{ll}
t & \quad \text{if } \, B=0, \\[0.1cm]
B^{-1} \tanh(Bt) & \quad \text{if } B>0
\end{array}\right.
\end{equation}
Suppose that the mean curvature $H$ of $\varphi$ satisfies
\begin{equation}\label{ipocurvatmedia}
\|H\| = \|H\|_{L^\infty(M)} < \left\{ \begin{array}{ll}
\disp \min\left\{\frac{m-1}{m\mu(2R_0)} \, , \, \frac{c}{m\|\overline\nabla
F\|_{L^\infty(D)}}\right\} & \quad \text{if } \lim\varphi \cap \partial D \neq
\emptyset, \\[0.5cm]
\disp \frac{m-1}{m\mu(2R_0)} & \quad \text{if } \lim\varphi \cap \partial D =
\emptyset,
\end{array}\right.
\end{equation}
and set
$$
\theta = m - 1 - m \mu(2R_0)\|H\|.
$$
If $\haus_\Psi(\lim \varphi \cap D) = 0$, where
\begin{equation}\label{definpsi}
\begin{array}{ll}
\Psi(t) = t^2 & \quad \text{if } \theta>1 \\[0.1cm]
\Psi(t) = t^2|\log t| & \quad \text{if } \theta=1 \\[0.1cm]
\Psi(t) = t^{\theta+1} & \quad \text{if } \theta \in (0,1),
\end{array}
\end{equation}
then the spectrum of $-\Delta$ is discrete.
\end{theorem}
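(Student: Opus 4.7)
The plan is to reduce, via Persson's theorem for the Friedrichs extension,
\[
\inf \sigma_\ess(-\Delta) = \sup_{K \Subset M} \lambda_1^{\mathrm{Dir}}(M \setminus K),
\]
to the task of constructing, for every $\Lambda > 0$, a compact $K_\Lambda \subset M$ on whose complement one has $\int u^2 \le \Lambda^{-1} \int |\nabla u|^2$ for all $u \in C^\infty_c(M \setminus K_\Lambda)$. The analytic engine is the Allegretto--Piepenbrink / Barta--Agmon principle: whenever $W > 0$ on an open $\Omega \subseteq M$ satisfies $-\Delta W \ge V W$ weakly, the substitution $\eta = W \psi$ with $\psi \in C^\infty_c(\Omega)$ and integration by parts give
\[
\int_\Omega V \eta^2 \,\di\vol \le \int_\Omega |\nabla \eta|^2 \,\di\vol, \qquad \forall\, \eta \in C^\infty_c(\Omega).
\]
The strategy is to build positive supersolutions whose Hardy-type potential $V$ becomes large everywhere near $\lim\varphi$, combining the convexity function $F$ (for the part of the limit set on $\partial D$) with radial comparison functions (for the part inside $D$).

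The boundary contribution, active only when $\lim\varphi \cap \partial D \ne \emptyset$, is handled by the convexity function. The composition formula
\[
\Delta_M(f \circ \varphi) = \mathrm{tr}_g\, \varphi^*(\overline\nabla \di f) + m \langle \overline\nabla f, H \rangle,
\]
applied to $f = F$, together with the $m$-convexity bound on the sum of the first $m$ eigenvalues of $\overline\nabla \di F$ and the hypothesis \eqref{ipocurvatmedia}, yields $\Delta_M(F \circ \varphi) \ge c - m\|H\|\|\overline\nabla F\|_{L^\infty(D)} =: \alpha > 0$. Taking $W := -F \circ \varphi > 0$, on the tubular region $\Omega_\delta := \{p \in M : W(p) < \delta\}$ one has $-\Delta W \ge \alpha \ge (\alpha/\delta)\,W$, so Allegretto--Piepenbrink forces $\lambda_1^{\mathrm{Dir}}(\Omega_\delta) \ge \alpha/\delta$, as large as we please by shrinking $\delta$. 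We are then reduced to controlling test functions supported in $\varphi^{-1}(\{F \le -\delta\})$ and, concretely, to covering the compact piece $\lim\varphi \cap \{F \le -\delta\}$ of the limit set.

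For the interior, fix $\varepsilon > 0$ to be chosen and exploit $\haus_\Psi(\lim\varphi \cap D) = 0$ to produce a finite cover $\{B_{r_j}(y_j)\}_{j=1}^N$ of $\lim\varphi \cap \{F \le -\delta\}$ with $\sum_j \Psi(r_j) < \varepsilon$ and every $r_j$ below a uniform geometric threshold. The complement $K_\Lambda := \varphi^{-1}(\{F \le -\delta\}) \setminus \varphi^{-1}\big(\bigcup_j B_{r_j}(y_j)\big)$ is then compact in $M$: any sequence there diverging in $M$ would accumulate on a point of $\lim\varphi \cap \{F \le -\delta\}$ which, by hypothesis, lies in one of the balls. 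Near each $y_j$, set $\tilde\rho_j = \rho_{y_j} \circ \varphi$ and look for $W_j = G(\tilde\rho_j) > 0$ with $G$ smooth and decreasing. The Hessian comparison
\[
\overline\nabla \di \rho_{y_j} \ge \frac{h_B'(\rho_{y_j})}{h_B(\rho_{y_j})}\,\big(\metric - \di \rho_{y_j} \otimes \di \rho_{y_j}\big)
\]
on the Cartan--Hadamard $N$, traced along $T\varphi(M)$ and combined with $|\nabla^M \tilde\rho_j| \le 1$ and the mean curvature correction, yields
\[
\Delta_M \tilde\rho_j \ge \frac{h_B'(\tilde\rho_j)}{h_B(\tilde\rho_j)}\big(m - |\nabla \tilde\rho_j|^2\big) - m\|H\|.
\]
A regime-dependent ODE analysis then fixes $G$ in terms of $\theta = m-1-m\mu(2R_0)\|H\|$: power weights $G(s) = s^{-\alpha}$ with $\alpha$ optimally tuned when $\theta > 1$ (producing a Hardy potential of order $1/s^2$), the logarithmic weight $G(s) = (-\log s)^\gamma$, $\gamma \in (0,1)$, in the critical case $\theta = 1$ (producing $\gamma(1-\gamma)/[s^2\log^2 s]$), and rescaled powers when $\theta \in (0,1)$ (producing order $1/s^{1+\theta}$). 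Allegretto--Piepenbrink then converts the pointwise bound on $-\Delta W_j/W_j$ into the local Hardy inequality $\int u^2 \le C\,\Psi(r_j)\int |\nabla u|^2$, with $C$ uniform in $j$, for every $u \in C^\infty_c(\varphi^{-1}(B_{r_j}(y_j)))$.

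The last step is the passage from local Hardy to a global Poincar\'e on $M \setminus (K_\Lambda \cup \Omega_\delta)$. Thinning the cover to a subfamily of bounded multiplicity (standard in Riemannian geometry via a Besicovitch-type argument) and applying an IMS-type localization subordinate to the cover, the cross terms $|\nabla \eta_j|^2 \sim r_j^{-2}$ produced by the partition of unity are absorbed by the Hardy potential itself; summing the local estimates gives $\int u^2 \le C \varepsilon \int|\nabla u|^2$, so that $\varepsilon \le (C \Lambda)^{-1}$ together with $\delta \le \alpha/\Lambda$ closes Persson's reduction. The hardest step is the third one: in each of the three regimes the choice of $G$ must absorb the mean curvature and curvature error terms exactly so that the resulting Hardy constant is driven by the correct gauge $\Psi$, and in the critical case $\theta = 1$ the interplay between the $1/(s^2 \log^2 s)$ pointwise Hardy and the logarithmic cost of the covering/localisation is precisely what produces the sharp $\Psi(t) = t^2 |\log t|$ of Theorem \ref{teo_maindim2}.
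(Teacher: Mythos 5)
Your overall architecture (Persson's formula, the convexity function $F$ for the part of $\lim\varphi$ near $\partial D$, radial comparison supersolutions near the interior part, and a covering of small $\Psi$-mass) is conceptually aligned with the paper's. The analytic principle you invoke (Allegretto--Piepenbrink / Barta--Agmon via a positive supersolution) is essentially the same as the generalized Barta inequality the paper uses. The genuine divergence is in how the local information is globalized, and there your argument has a gap.

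You propose to patch the local Hardy inequalities from the $W_j = G(\tilde\rho_j)$ with an IMS localization subordinate to a bounded-multiplicity subcover, claiming that the cross terms $|\nabla\eta_j|^2\sim r_j^{-2}$ are "absorbed by the Hardy potential itself." But the Hardy potential generated by a decreasing $G$ on $\varphi^{-1}(B_{r_j}(y_j))$ is large only near the center; near the boundary of the ball (precisely where $\nabla\eta_j$ lives) it is of order $1/\Psi(r_j)$. For $\theta=1$ this is $\sim 1/(r_j^2|\log r_j|)$, and for $\theta\in(0,1)$ it is $\sim r_j^{-(\theta+1)}$, both of which are \emph{smaller} than the cross term $r_j^{-2}$; even for $\theta>1$ the two are only comparable, so absorption would require the Hardy constant to strictly dominate the partition-of-unity constant, which you do not establish and which degenerates with $\theta$. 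As a result, the absorption step fails exactly in the critical and subcritical regimes that produce the gauges $t^2|\log t|$ and $t^{\theta+1}$, and is unjustified even in the supercritical one. Your closing sentence, which attributes the sharp $\Psi(t)=t^2|\log t|$ to "the interplay between the $1/(s^2\log^2 s)$ pointwise Hardy and the logarithmic cost of the localization," is waving at precisely the place where the estimate breaks down rather than resolving it.

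The paper avoids this entirely by making subharmonicity do the globalization. Its fundamental lemma produces, for each center $x_j$, a globally defined, bounded, strictly \emph{subharmonic} $u_j = g(\rho_{x_j}\circ\varphi)$ with an increasing $g$, satisfying $\|u_j\|_{L^\infty}\le C\,\Psi(\eps_j)$ while $\Delta u_j\ge(\theta+1)/2$ on $\varphi^{-1}(B_j)$ and $\Delta u_j>0$ everywhere. Because subharmonicity is additive and the $u_j$ are defined on all of $M$, one can form the single Barta supersolution
$$
w = \sum_{j=1}^{k}\big(2\|u_j\|_{L^\infty}-u_j\big) - u_\infty,
\qquad u_\infty = b\,F\circ\varphi ,
$$
which is positive, strictly superharmonic, and whose supremum is controlled by $\sum_j\Psi(\eps_j)$. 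Barta's inequality then gives the lower bound on $-\Delta w/w$ pointwise, with no partition of unity, no cross terms, and no multiplicity issue. Note also that the paper does not thin the covering to bounded multiplicity, and does not need to: the sum $\sum_j\|u_j\|_{L^\infty}$ is controlled directly by $\sum_j\Psi(\eps_j)$. If you want to repair your version, you should replace the patching of local Hardy inequalities by this summation of global subharmonic profiles; the relevant construction, including the ODE for $g$ (which is $(h^\theta g')' = h^\theta w$ for a suitable weight $w$ supported near $0$) and the $L^\infty$ estimate $\|g\|_{L^\infty([0,R])}\lesssim \Psi(a)$, is exactly what encodes the $\theta$-dependent gauge $\Psi$.
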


In the minimal case, the result is particularly transparent and extends Theorem
\ref{teo_maindim2} to the higher-dimensional case.

\begin{theorem}\label{teo_main}

Let $\varphi \colon M^m \hra D \subset N^n$ be a minimal immersion into a
bounded, $m$-convex subset of a Cartan-Hadamard manifold $N$. If
$\haus_\Psi(\lim\varphi \cap D) = 0$, where
\begin{equation}\label{definpsirelaxed}
\Psi(t) = t^2 \quad \text{if } m>2, \qquad \text{or} \qquad \Psi(t) = t^2|\log
t| \quad \text{if } m=2,
\end{equation}
then the spectrum of $-\Delta$ is discrete.
\end{theorem}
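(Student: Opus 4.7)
The plan is to derive Theorem \ref{teo_main} as an immediate specialization of Theorem \ref{teo_maingenerale}, since the minimal case corresponds to $H \equiv 0$ identically. First I would verify that the radial curvature hypothesis $\overline K_\rad \le -B^2$ required in Theorem \ref{teo_maingenerale} is vacuously satisfied on any Cartan--Hadamard manifold by choosing $B=0$, which is admissible because non-positive sectional curvature is part of the definition; by \eqref{exmut} this gives $\mu(t)=t$, so $\mu(2R_0)=2R_0$ is finite and positive for any $R_0 > \tfrac12 \diam(D)$.

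Next I would check the smallness condition \eqref{ipocurvatmedia} on the mean curvature. In the minimal case the left-hand side is $\|H\|=0$, and both bounds on the right-hand side are strictly positive: the quantity $(m-1)/(m\mu(2R_0))$ is positive trivially, while $c/(m\|\overline\nabla F\|_{L^\infty(D)})$ is positive because $D$ is bounded and $F\in C^2(N)$, so $\|\overline\nabla F\|_{L^\infty(D)}$ is finite, and the convexity constant $c>0$ by assumption. Thus \eqref{ipocurvatmedia} holds regardless of whether $\lim\varphi \cap \partial D$ is empty or not.

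With the hypotheses of Theorem \ref{teo_maingenerale} verified, the exponent defined there,
\[
\theta \;=\; m-1 - m\,\mu(2R_0)\,\|H\|,
\]
collapses to $\theta = m-1$. Splitting by dimension, for $m=2$ we get $\theta=1$, and \eqref{definpsi} prescribes $\Psi(t)=t^2|\log t|$; for $m\ge 3$ we have $\theta = m-1 \ge 2 > 1$, and \eqref{definpsi} prescribes $\Psi(t)=t^2$. In both cases this matches the definition of $\Psi$ in \eqref{definpsirelaxed}, so the measure hypothesis $\haus_\Psi(\lim\varphi\cap D)=0$ in Theorem \ref{teo_main} coincides with the one required by Theorem \ref{teo_maingenerale}, and we conclude that $\sigma(-\Delta)$ is discrete.

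There is no genuine obstacle in this reduction; the substantive content is entirely contained in Theorem \ref{teo_maingenerale}. The only point deserving care is the branching in \eqref{ipocurvatmedia} according to whether $\lim\varphi$ touches $\partial D$, but since $H\equiv 0$ trivially satisfies the more restrictive of the two upper bounds, this branching is inconsequential here.
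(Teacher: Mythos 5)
Your proposal is correct and follows exactly the route the paper intends: Theorem~\ref{teo_main} is simply the specialization of Theorem~\ref{teo_maingenerale} to $H\equiv 0$, with $B=0$ admissible for a Cartan--Hadamard ambient, $\theta=m-1$, and the three cases in \eqref{definpsi} collapsing to the two cases of \eqref{definpsirelaxed}.
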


\begin{remark}
\emph{We underline that, in Theorem \ref{teo_main}, raising the dimension of the
manifold $M$ does not yield an improvement of the allowed Hausdorff dimension of
$\lim\varphi \cap D$. Indeed, the exponent $2$ is essential for the arguments of
the proof of Theorem \ref{teo_maingenerale} to work. We will come back to this
observation later.
}
\end{remark}

We now come to a brief description of the strategy of the proof. To show that
$-\Delta$ has discrete spectrum, as usual we rely on a combination of Persson
formula, see \cite{persson}, and Barta's inequality (\cite{barta}, and its
generalized version in \cite{bessa-montenegro2}). Persson formula relates the
infimum $\inf\sigma_\ess(-\Delta)$ of the essential spectrum of $-\Delta$ to the
fundamental tone of the complementary of compact sets:
\begin{equation}\label{persson}
\inf\sigma_\ess(-\Delta) = \sup_{\footnotesize K \subset M \text{ compact}}
\lambda^*(M\backslash K)
\end{equation}
where  $\lambda^*(M\backslash K)$ is the bottom of the spectrum of the
Friedrichs extension of $(-\Delta, C^\infty_c(M\backslash K))$. On the other
hand, Barta inequality gives a lower bound for $\lambda^*(M\backslash K)$ via
positive functions:
\begin{equation}\label{barta}
\lambda^*(M\backslash K) \ge \inf_{M\backslash K} \frac{-\Delta w}{w} \qquad
\text{for every }\, 0<w \in C^2(M\backslash K).
\end{equation}
Combining \eqref{persson} and \eqref{barta}, to prove that $-\Delta$ has
discrete spectrum or equivalently, by the min-max theorem, that
$\inf\sigma_\ess(-\Delta) = +\infty$ it is therefore enough to find an
increasing sequence of compact subsets $\{K_l\}$ of $M$, and functions $0<w_l
\in C^2(M\backslash K_l)$, such that
\begin{equation}\label{ipospettro}
\frac{-\Delta w_l}{w_l} \ge c_l \,\,\, \text{on } M\backslash K_l,\,\,\,\,\,
\text{with} \,\,\, c_l \ra +\infty \,\,\, \text{as }\,
 l \ra +\infty.
\end{equation}

Each $w_l$ will be constructed as a sum of suitable positive strictly
superharmonic functions, depending on the balls of a good covering of $\lim
\varphi$. The key point to construct them is the next lemma. We state it in a
rather general form in order to put in evidence the flexibility of the
procedure, and to underline some subtle phenomena.

\subsection*{The fundamental lemma}\label{sec_lemma}

Given a bounded immersion $\varphi \colon M^m \ra N^n$, the next key lemma
enables us to construct bounded, strictly subharmonic functions on $M$ with a
very precise control both on their Laplacian and on their $L^\infty$-norm. We
remark that this is possible since, in our assumptions on $\|H\|$ and on $N$,
$M$ turns out to be non-parabolic.\\
Throughout this section, we assume the following:
\begin{itemize}
\item[$(\mathcal{H}_1)$] $N^n$ has a pole $x_0$, and the radial sectional
curvatures $\overline{K}_\rad$ of radii issuing from $x_0$ in $N$ satisfy
$$
\overline{K}_{\rad}(y) \le - G( \rho(y) ), \qquad \rho(y) =
\mathrm{dist}(y,x_0),
$$
for some $G\in C^0(\R^+_0)$.
\item[$(\mathcal{H}_2)$] The solution $h(t)$ of \eqref{defh} satisfies
\begin{equation}
h,h'>0 \quad \text{on } \R^+, \qquad h(t) \uparrow +\infty \quad \text{as } t
\ra +\infty.
\end{equation}
\end{itemize}

\begin{remark}\label{rem_ipoGh}

\emph{By Proposition 1.21 in \cite{bmr2}, $h$ satisfies $(\mathcal{H}_2)$
whenever the negative part of $G$ is small in the following sense:
\begin{equation}\label{buonacurva}
G_-(s) \le \frac{1}{4s^2} \qquad \text{on } \R^+.
\end{equation}
Furthermore, \eqref{buonacurva} ensures the validity of the lower bound $h(t)
\ge C\sqrt{t}\log t$ for $t \ge t_0\ge 2$, for some positive constant $C$.
}
\end{remark}

We set for convenience $\mu(t) = \|h/h'\|_{L^\infty([0,t])}$. Since $h'(0)=1$,
fix $\bar{a} \in (0,1)$ small enough that
\begin{equation}\label{ipobara}
h'(t) \ge \frac 12 \qquad \text{for every } t \in [0,\bar a].
\end{equation}

\begin{lemma}\label{basicconstr}
Suppose that the conditions $(\mathcal{H}_1)$, $(\mathcal{H}_2)$ are met, and
let $\varphi \colon M^m \ra N^n$ be an isometric immersion into a ball
$B_R(x_{0})\subset N$, with mean curvature satisfying
\begin{equation}\label{ipocurvatmedia}
\Vert H\| = \|H\|_{L^\infty(M)} < \frac{m-1}{m\mu(R)}.
\end{equation}
Set
$$
\theta = m - 1 - m\|H\|\mu(R) >0,
$$
and choose a positive, non-increasing function $S\in C^0(\R^+_0)$ satisfying
\begin{equation}\label{defS}
S(0) =1, \qquad \int_0^{+\infty} t^\theta S(t) \di t = \hat{S} <+\infty.
\end{equation}
Then, there exists a positive constant $C$, depending on $m,R,\theta, S$ and
$h_{|_{[0,R]}}$ such that the following holds: for each $a \in (0,\bar a]$,
there is a smooth function $u_{x_0} \colon M \ra \R$ such that
\begin{eqnarray}
(i) & & u_{x_0} \ge 0, \quad  u_{x_0}(p) =0 \ \text{ if and only if }
\varphi(p)=x_0, \label{propux1}\\[0.2cm]
(ii) & & \|u_{x_0}\|_{L^\infty} \le \left\{\begin{array}{ll} C a^2 & \quad
\text{if } \theta>1 \\[0.1cm]
C a^2|\log a| & \quad \text{if } \theta = 1 \\[0.1cm]
C a^{\theta+1} & \quad \text{if } \theta \in (0,1) \end{array}\right.
\label{propux2} \\[0.3cm]
(iii) & & \Delta u_{x_0} \ge \left\{\begin{array}{ll} \disp \frac{(\theta+1)}{2}
& \quad \text{on } \varphi^{-1}\big\{B_a(x_0)\big\}, \\[0.3cm]
\disp \theta h'(\rho \circ \varphi) S\left(\frac{h(\rho\circ
\varphi)-h(a)}{h(a)}\right) & \quad \text{on } \varphi^{-1}\big\{N\backslash
B_a(x_0)\big\}.
\end{array}\right. \label{propux3}
\end{eqnarray}
\end{lemma}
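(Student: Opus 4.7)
The plan is to take $u_{x_0}(p) = \Phi(\rho(\varphi(p)))$ for a smooth, nonnegative, nondecreasing function $\Phi \colon [0,R] \to \R$ with $\Phi(0) = 0$, and to split $\Phi = \Phi_1 + \Phi_2$ so that each summand is responsible for one of the two regimes in (iii). The starting point is the standard Laplacian-of-a-composition formula for isometric immersions combined with the Hessian comparison theorem (valid under $(\mathcal{H}_1)$ and $(\mathcal{H}_2)$ because $x_0$ is a pole). Together these yield, for any smooth $f\colon [0,R]\to\R$ with $f'\ge 0$ and at each $p$ with $\varphi(p)\ne x_0$,
\[
\Delta(f\circ\rho\circ\varphi)(p) \ge f''(\rho) T + f'(\rho)\frac{h'(\rho)}{h(\rho)}(m-T) - m\|H\|f'(\rho),
\]
where $T = |\nabla^M(\rho\circ\varphi)|^2 \in [0,1]$. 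Eliminating $T$ by taking the worst case in $[0,1]$ and invoking $\mu(R)h'\ge h$ together with $\theta+1 = m(1-\|H\|\mu(R))$, I obtain two usable bounds: $\Delta \ge (\theta+1)f' h'/h$ whenever $f''\ge f'h'/h$, and $\Delta \ge f'' + (m-1)f' h'/h - m\|H\|f'$ in general.

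Next, I would construct the two pieces explicitly. For the inner part take $\Phi_1'(\rho) = h(\rho)\chi(\rho)$, with $\chi \in C^\infty([0,R])$ equal to $1$ on $[0,a]$ and decaying smoothly like $(h(a)/h(\rho))^{\theta+1}$ on $[a, R]$. Since $\Phi_1'/h = \chi$ is constant on $[0,a]$, the first bound above gives $\Delta(\Phi_1\circ\rho\circ\varphi) \ge (\theta+1)h'(\rho) \ge (\theta+1)/2$ on $\varphi^{-1}(B_a(x_0))$, using $h'\ge 1/2$ on $[0,\bar a]$; on $[a, R]$, the logarithmic-decay choice for $\chi$ makes $(\theta+1)\chi h' + h\chi' \equiv 0$, keeping the $\Phi_1$-contribution nonnegative. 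For the outer part take
\[
\Phi_2'(\rho) = h(\rho)^{-\theta}\int_0^{h(\rho)}\theta s^\theta\,\tilde S\!\left(\frac{s-h(a)}{h(a)}\right) ds,
\]
where $\tilde S\in C^\infty(\R)$ is a nonnegative, nonincreasing extension of $S$ that vanishes to infinite order on $(-\infty,-\epsilon]$ for a small $\epsilon>0$; a direct computation shows $\Phi_2''\le \Phi_2' h'/h$, and the second bound above collapses, after using $\mu(R)h'\ge h$, to
\[
\Delta(\Phi_2\circ\rho\circ\varphi) \ge \theta h'(\rho)\,\tilde S\!\left(\frac{h(\rho)-h(a)}{h(a)}\right),
\]
which matches the required $\theta h'(\rho) S((h(\rho)-h(a))/h(a))$ on $\varphi^{-1}(N\setminus B_a(x_0))$ and gives a nonnegative contribution on $\varphi^{-1}(B_a(x_0))$. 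Adding the two pieces yields (iii); property (i) follows from $\Phi_1(\rho)>0$ for $\rho\in(0,a]$ and $\Phi_j\ge 0$, and smoothness of $u_{x_0}$ at $\varphi^{-1}(x_0)$ is ensured because $\Phi(\rho) = O(\rho^2)$ has an even Taylor expansion at $0$ and hence factors through $\rho^2$, which is smooth at $x_0$.

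The final step is the $L^\infty$ bound (ii). Since $\Phi$ is nondecreasing, $\|\Phi\|_\infty = \Phi(R) \le \Phi_1(R) + \Phi_2(R)$. For the inner piece, $\Phi_1(R) \le \int_0^a h(t)\,dt + h(a)^{\theta+1}\int_a^R h(t)^{-\theta}\,dt$, which already reproduces the three asymptotic regimes. For $\Phi_2(R)$, the change of variable $u=(s-h(a))/h(a)$ in the inner integral followed by exchanging the order of integration reduces the task to controlling
\[
h(a)^{\theta+1}\int_0^{(h(R)-h(a))/h(a)}(1+u)^\theta S(u)\int_{h^{-1}(h(a)(1+u))}^R h(t)^{-\theta}\,dt\,du,
\]
which produces the same three regimes. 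The crucial input is that monotonicity of $S$ together with $\hat S<\infty$ forces $s^{\theta+1}S(s)\le C$, whence $\int_0^\infty(1+u)^\theta S(u)\,du<\infty$; the three asymptotics of $\int_{h(a)}^{h(R)}s^{-\theta}\,ds$ as $a\downarrow 0$ (namely $h(a)^{1-\theta}/(\theta-1)$ for $\theta>1$, $|\log h(a)|$ for $\theta=1$, and $h(R)^{1-\theta}/(1-\theta)$ for $\theta\in(0,1)$) then translate, after multiplication by $h(a)^{\theta+1}\sim a^{\theta+1}$, into precisely $Ca^2$, $Ca^2|\log a|$ and $Ca^{\theta+1}$. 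I expect the main technical obstacle to be the delicate smooth matching required in the definitions of $\chi$ and $\tilde S$ (so that $\Phi$ is globally $C^\infty$) together with the careful book-keeping needed to certify that both lower bounds of (iii) survive across the transition at $\rho = a$.
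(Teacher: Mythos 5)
Your construction is essentially the paper's: a radial composition with $\rho\circ\varphi$, Hessian comparison combined with the worst case over $T=|\nabla^M(\rho\circ\varphi)|^2\in[0,1]$, a profile built around the ODE-type relation $(h^\theta g')'=h^\theta w$ with source term involving $S((h-h(a))/h(a))$, and the same three asymptotic regimes for the sup bound. Your additive split $\Phi=\Phi_1+\Phi_2$ and Fubini computation for $\Phi_2(R)$ replace the paper's single profile $g$ (with the $C^0$ source $w$ matching continuously at $t=a$) and its telescoping over the levels $h(a_k)=kh(a)$, but these are equivalent up to bookkeeping, as are the small imprecisions you already flag about mollifying $\chi$ and $\tilde S$ at the transition.
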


\begin{remark}
\emph{
Note that $\mu(R)\|H\|$ is scale-invariant, thus $\theta$ defines a genuine
geometrical object associated to the immersion.
}
\end{remark}

\begin{proof}

First of all we recall that, for a function $f \in C^2(N)$, by the chain rule,
the composition $f\circ \varphi \in C^2(M)$ satisfies
$$
\nabla \di (f \circ \varphi) = \overline{\nabla} \di f (\di \varphi, \di
\varphi) + \di f(\nabla \di \varphi),
$$
Now, let $\{e_i, e_\alpha\}$ be a local Darboux frame along $\varphi$, with
$\{e_i\}$ tangent to $M$. Then, tracing the above equality, it yields
\begin{equation}\label{formulalapla}
\Delta(f \circ \varphi) = \disp \sum_{j=1}^m \overline{\nabla} \di f(e_j,e_j) +
m \di f(H).
\end{equation}
Define $f(y)= g(\rho(y))$, for some suitable $g \in C^2(\R^+_0)$, $g'\geq 0$,
that will be chosen in a moment. By the Hessian comparison theorem (see
\cite{bmr2}, Theorem 1.15)
$$
\overline{\nabla} \di \rho \ge \frac{h'(\rho)}{h(\rho)} \Big( \metric - \di \rho
\otimes \di \rho \Big).
$$
Hence, if $g$ is increasing,
\begin{equation}\label{hessiana}
\overline{\nabla} \di f \ge \frac{g'(\rho)h'(\rho)}{h(\rho)} \Big( \metric - \di
\rho \otimes \di \rho \Big) + g''(\rho) \di \rho \otimes \di \rho.
\end{equation}
By \eqref{hessiana}, and using that $|\di \rho|=1$,
\begin{equation}\label{ilboundperdeltaf}
\begin{array}{l}
\disp \sum_{j=1}^m \overline{\nabla} \di f(e_j,e_j) + m\di f(H) = \frac{g'h'}{h}
\Big( m - \sum_{j=1}^m \di \rho(e_j)^2 \Big) + g'' \sum_{j=1}^m \di \rho(e_j)^2
+ mg'\di \rho(H) \\[0.4cm]
\disp \ge \frac{g'h'}{h} \Big( m - \sum_{j=1}^m \di \rho(e_j)^2 -
m\frac{h}{h'}\|H\| \Big) + g''\sum_{j=1}^m \di \rho(e_j)^2 \\[0.4cm]
\disp \ge \frac{g'h'}{h} \Big( m - \sum_{j=1}^m \di \rho(e_j)^2 - m\mu(R)\|H\|
\Big) + g''\sum_{j=1}^m \di \rho(e_j)^2.
\end{array}
\end{equation}
Define
$$
w(t) = \left\{\begin{array}{ll}
\disp (\theta+1)h'(t) & \quad \text{if } t \le a \\[0.2cm]
\disp (\theta+1)h'(t) S\left(\frac{h(t)-h(a)}{h(a)}\right) & \quad \text{if } t
\ge a,
\end{array}\right.
$$
and set
\begin{equation}\label{defing}
g(t) = \int_0^t \frac{1}{h(s)^\theta} \left[\int_0^s h(\sigma)^\theta w(\sigma)
\di \sigma\right] \di s.
\end{equation}
Note that
\begin{equation}\label{gsutlea}
g(t)= \int_0^t h(s) \di s \qquad \text{for } t \le a,
\end{equation}
so $g \in C^2(\R^+_0)$, and that
\begin{equation}\label{eq_g}
g'(t) = \frac{1}{h(t)^\theta} \int_0^t h(s)^\theta w(s) \di s >0, \qquad
\big(h(t)^\theta g'(t)\big)' = h(t)^\theta w(t) \qquad \text{on } \R^+.
\end{equation}
Since, on $[0,a]$, by \eqref{gsutlea} it holds $g''(t)/g'(t)= h'(t)/h(t)$, the
estimate \eqref{ilboundperdeltaf} implies
\begin{equation}\label{primadia}
\sum_{j=1}^m \overline{\nabla} \di f(e_j,e_j) + m\di f(H) \ge \left(m- m
\mu(R)\|H\|\right) g''(\rho) = (\theta+1) h'(\rho)
\end{equation}
at every point $p \in M$ such that $\varphi(p) \in B_a(x_0)$. Consider now a
point $p \in M$ such that $\rho(\varphi(p))\ge a$ and $g''(\rho(\varphi(p))) \ge
0$. Using $|\di \rho|=1$, \eqref{eq_g} and the fact that $g,h$ are increasing we
deduce that,  at $p$,
\begin{equation}\label{g2magzero1}
\begin{array}{lcl}
\disp \sum_{j=1}^m \overline{\nabla} \di f(e_j,e_j) + m\di f(H) & \ge & \disp
\left(m- 1 - m\mu(R)\|H\|\right) \frac{g'(\rho)h'(\rho)}{h(\rho)} \\[0.4cm]
\disp & = & \disp \theta \frac{h'(\rho)}{h(\rho)^{\theta+1}}\int_0^\rho
h(s)^{\theta}w(s)\di s.
\end{array}
\end{equation}
Now, by the definition of $w(s)$ and since $h$ is increasing and $S$ is
non-increasing,
\begin{eqnarray}
\disp \frac{h'(\rho)}{h(\rho)^{\theta+1}}\int_0^\rho h(s)^{\theta}w(s)\di s & =
& \disp \frac{h'(\rho)h(a)^{\theta+1}}{h(\rho)^{\theta+1}}\nonumber  \\[0.4cm]
& & \disp + \frac{h'(\rho)}{h(\rho)^{\theta+1}}\int_a^\rho
h(s)^{\theta}(\theta+1)h'(s)S\left(\frac{h(s)-h(a)}{h(a)}\right)\di s  \\[0.4cm]
& \ge & \!\!\!\disp \frac{h'(\rho)h(a)^{\theta+1}}{h(\rho)^{\theta+1}} +
\frac{h'(\rho)}{h(\rho)^{\theta+1}}S\left(\frac{h(\rho)-h(a)}{h(a)}\right)
\left[ h(\rho)^{\theta+1}-h(a)^{\theta+1}\right] \nonumber\\[0.4cm]
& \ge & \disp h'(\rho) S\left(\frac{h(\rho)-h(a)}{h(a)}\right).\nonumber
\end{eqnarray}
Combining with \eqref{g2magzero1} we get
\begin{equation}\label{g2magzero}
\disp \sum_{j=1}^m \overline{\nabla} \di f(e_j,e_j) + m\di f(H) \ge \theta
h'(\rho) S\left(\frac{h(\rho)-h(a)}{h(a)}\right)
\end{equation}
at those points for which $\rho(\varphi(p)) \ge a$ and $g''(\rho(\varphi(p)))
\ge 0$. On the other hand, at those points for which $g''(\rho(\varphi(p))) \le
0$ we can bound as follows:
\begin{equation}\label{dopodia}
\begin{array}{lcl}
\disp \sum_{j=1}^m \overline{\nabla} \di f(e_j,e_j) + m\di f(H) & \ge & \disp
\frac{g'(\rho)h'(\rho)}{h(\rho)} \Big( m - 1 - m\mu(R)\|H\| \Big) - |g''(\rho)|
\\[0.4cm]
& = & \disp \theta\frac{g'(\rho)h'(\rho)}{h(\rho)} + g''(\rho) \\[0.4cm]
& = & \disp h(\rho)^{-\theta} \big( h(t)^\theta g'(t)\big)'_{|t=\rho} = w(\rho)
\\[0.4cm]
& = & \disp (\theta+1)h'(\rho) S\left(\frac{h(\rho)-h(a)}{h(a)}\right)
\end{array}
\end{equation}
the second equality following from \eqref{eq_g}, and the third from the
definition of $w(s)$. Putting together \eqref{g2magzero} and \eqref{dopodia} we
get
\begin{equation}\label{g2magzero3}
\disp \sum_{j=1}^m \overline{\nabla} \di f(e_j,e_j) + m\di f(H) \ge \theta
h'(\rho) S\left(\frac{h(\rho)-h(a)}{h(a)}\right)
\end{equation}
for every $p$ with $\rho(\varphi(p)) \ge a$. Concluding, from
\eqref{formulalapla}, \eqref{primadia}, \eqref{g2magzero} and \eqref{dopodia} we
get
\begin{equation}\label{bonlapla}
\left\{\begin{array}{ll}
\Delta (f\circ \varphi) \ge (\theta+1) h'(\rho \circ \varphi) & \quad \text{on }
\varphi^{-1}\big\{B_a(x_0)\big\}, \\[0.3cm]
\disp \Delta (f\circ \varphi) \ge \theta h'(\rho \circ \varphi)
S\left(\frac{h(\rho\circ \varphi)-h(a)}{h(a)}\right) & \quad \text{on }
\varphi^{-1}\big\{N\backslash B_a(x_0)\big\}.
\end{array}\right.
\end{equation}
Define $u_{x_0} = f \circ \varphi$. Property \eqref{propux1} is immediate, and
\eqref{propux3} follows from \eqref{ipobara} and \eqref{bonlapla}. We are left
to prove the $L^\infty$-bounds \eqref{propux2}. By \eqref{defS}, there exists $S^*=S^*(\theta,S)$ such that
\begin{equation}\label{propS}
(\theta +1)\sum_{k=1}^{+\infty} S(k)(k+1)^{\theta}  = S^* < \infty.
\end{equation}
Define a sequence
$\{a_k\}_{k=0}^{+\infty}$ in such a way that
\begin{equation}\label{propak}
h(a_k) = kh(a).
\end{equation}
In our assumption $(\mathcal{H}_2)$, $h$ is increasing and $h(t) \ra +\infty$ if
$t \ra +\infty$, thus each $a_k$ is well defined,  $\{a_k\}$ is increasing and
$a_k \ra +\infty$ as $k\ra +\infty$. For every $s \in \R^+$, define $N(s)$ to be
the greatest $k$ such that $a_k < s$. Then, for $s \ge a$,
\begin{equation}\label{gprimostima}
\begin{array}{l}
\disp \int_0^s h(\sigma)^\theta w(\sigma) \di \sigma  = \int_0^a \ldots +
\int_a^s \ldots = h(a)^{\theta+1} + \int_a^s h(\sigma)^\theta w(\sigma) \di
\sigma \\[0.4cm]
\disp \le h(a)^{\theta+1} + \sum_{k=1}^{N(s)} \int_{a_k}^{a_{k+1}} (\theta+1)
h(\sigma)^\theta h'(\sigma)
S \left(\frac{h(\sigma)-h(a)}{h(a)}\right)\di \sigma.
\end{array}
\end{equation}
Since $h$ is increasing, $h \ge h(a_k)$ on $[a_k, a_{k+1}]$. Property
\eqref{propak} and the fact that $S$ is non-increasing thus imply
\begin{equation}\label{gprimostima2}
\begin{array}{l}
\disp \int_a^s h(\sigma)^\theta w(\sigma) \di \sigma  \le \sum_{k=1}^{N(s)} S(k)
\int_{a_k}^{a_{k+1}} (\theta+1) h(\sigma)^\theta h'(\sigma)  \di \sigma
\\[0.4cm]
\disp \le \sum_{k=1}^{N(s)} S(k) \big[h(a_{k+1})^{\theta+1} -
h(a_{k})^{\theta+1}\big] \\[0.4cm]
\disp \le h(a)^{\theta+1}\sum_{k=1}^{N(s)} S(k) \big[(k+1)^{\theta+1} -
k^{\theta+1}\big] \le h(a)^{\theta+1}\sum_{k=1}^{N(s)} (\theta+1)
S(k)(k+1)^{\theta}
\end{array}
\end{equation}
the last inequality following by the mean value theorem. Therefore, putting
together \eqref{defing}, \eqref{gprimostima} and \eqref{gprimostima2} we argue
that, for $t \ge a$,
\begin{equation}\label{belbound}
\begin{array}{lcl}
\disp g(t) & = & \disp g(a) + \int_a^t \frac{1}{h(s)^\theta} \left[ \int_0^s
h(\sigma)^\theta w(\sigma) \di \sigma \right]\di s\\[0.4cm]
& \le & \disp g(a) + \int_a^t \frac{1}{h(s)^\theta} \left[ h(a)^{\theta+1} +
h(a)^{\theta+1}\sum_{k=1}^{N(s)} (\theta+1)S(k) (k+1)^{\theta}\right] \di
s\\[0.4cm]
& \le & \disp g(a) +  (1+S^*)h(a)^{\theta+1} \int_a^t \frac{\di s}{h(s)^\theta},
\end{array}
\end{equation}
where the last inequality comes from the definition of $S^*$ in \eqref{propS}.
Combining with \eqref{primadia} we conclude that
\begin{equation}\label{superg}
\begin{array}{lcl}
\disp \|g\|_{L^\infty([0,R])} & = &  \disp g(R) \le  g(a) + (1+S^*)
h(a)^{\theta+1} \int_a^{R} \frac{\di s}{h(s)^\theta} \\[0.4cm]
& = &\disp  \int_0^a h(s) \di s + (1+S^*) h(a)^{\theta+1} \int_a^{R} \frac{\di
s}{h(s)^\theta}
\end{array}
\end{equation}

From $h(s) = s + O(s^2)$ as $s\ra 0$ it is easy to deduce that
\begin{equation}\label{asinto}
\int_0^ah(s)\di s = \frac{a^2}{2} + O(a^3), \qquad \int_a^{R} \frac{\di
s}{h(s)^\theta} \sim
\left\{ \begin{array}{ll}
\disp \frac{a^{1-\theta}}{\theta-1} & \quad \text{if } \theta>1 \\[0.3cm]
\disp |\log a| & \quad \text{if } \theta=1, \\[0.3cm]
\disp C & \quad \text{if } \theta \in (0,1), \\[0.3cm]
\end{array}\right.
\end{equation}
For some $C$ depending on $m,R,\theta$ and $h$ on $[0,R]$. From \eqref{superg}
and \eqref{asinto}, there exists a positive constant $C$ depending only on
$m,R,\theta,S$ and on $h_{|_{[0,R]}}$ such that, if if $a \in (0, \bar a]$,
$$
\|g\|_{L^\infty([0,R])} \le \left\{ \begin{array}{ll} C a^2 & \quad \text{if }
\theta>1 \\[0.2cm]
a^2 + C a^2|\log a| \le C a^2 |\log a| & \quad \text{if } \theta=1 \\[0.2cm]
a^2 + C a^{\theta+1} \le Ca^{\theta+1} & \quad \text{if } \theta \in (0,1).
\end{array}\right.
$$
Noting that
$$
\|u_{x_0}\|_{L^\infty(M)} \le \|f\|_{L^\infty(B_{R}(x))} =
\|g\|_{L^\infty([0,R])},
$$
the desired bounds in \eqref{propux2} are proved.
\end{proof}

\begin{remark}\label{rem_parabolicity}

\emph{If $M$ is minimal then,  the constant $C$ in \eqref{propux2} does
\emph{not} depend on $R$ whenever the non-parabolicity condition
\begin{equation}\label{nonparabol}
\frac{1}{h(s)^{m-1}} \in L^1(+\infty)
\end{equation}
holds. Indeed, if $M$ is minimal then $\theta = m-1$. Once $S$ is chosen, the
value $S^*$ in \eqref{propS} is thus independent of $R$. By \eqref{nonparabol},
in the bound \eqref{superg} we can let $R\ra +\infty$ to obtain
\begin{equation}
\disp \|g\|_{L^\infty(\R^+_0)} \le \int_0^a h(s) \di s + (1+S^*) h(a)^{m}
\int_a^{+\infty} \frac{\di s}{h(s)^{m-1}}.
\end{equation}
The asymptotics $h(a) \sim a$ and
$$
h(a)^{m} \int_a^{+\infty} \frac{\di s}{h(s)^{m-1}} \sim \left\{
\begin{array}{ll}
\frac{a^2}{m-2} & \quad \text{if } m \ge 3, \\[0.2cm]
a^2 |\log a| & \quad \text{if } m=2
\end{array}\right.
$$
prove that $\|g\|_{L^\infty(\R^+_0)} \le Ca^2$ if $m\ge 3$ (respectively,
$\|g\|_{L^\infty(\R^+_0)} \le Ca^2|\log a|$ if $m =2$), for some constant $C$
only depending on $m,S^*$, proving the claim.}
\vspace{2mm}

\emph{By Remark \ref{rem_ipoGh}, condition \eqref{nonparabol} is always
satisfied whenever \eqref{buonacurva} holds and $m \ge 3$. We further observe
that, via Proposition 3.1 in \cite{Gr}, requirement \eqref{nonparabol} is
necessary and sufficient for the non-parabolicity of the radially symmetric
model $M^m_h$, which is, roughly speaking, compared to $M$ by means of formulas
\eqref{hessiana}. We recall that $M^m_h$ is defined as the manifold $\R^m$, with
a fixed origin $o$ and metric given, in polar geodesic coordinates $(r,\theta)$
centered at $o$, by  $\di s_h^2 = \di r^2 + h(r)^2 g_{\esse^{m-1}}(\theta)$,
where $g_{\esse^{m-1}}(\theta)$ is the standard metric on the unit sphere.
Clearly, since we are constructing bounded, non-constant and strictly
subharmonic functions on $M$, some non-parabolicity condition must necessarily
appear.
}
\end{remark}

\subsection*{Proof of Theorem \ref{teo_maingenerale}}

Property $R_0> \frac 12\mathrm{diam}(D)$ and the completeness of $N$ enable us
to choose $x \in N$ such that $D \Subset B_{R_0} = B_{R_0}(x)$. For notational
convenience, define $\lim_0\varphi = \lim\varphi \cap D$. Choose a small $r_1<<
\min\{R_0,1\}$ in such a way that
$$
B_{2r_1}(\mathrm{lim}_0\, \varphi) \subseteq B_{R_0}.
$$
Since the function $\Psi$ defined in \eqref{definpsi} satisfies \eqref{propper},
by Remark \ref{rem_haus} the measure $\overline \haus_\Psi(\lim_0\varphi)$
computed by only using balls is zero. Therefore, we can find a countable
covering $\{B_j\}$ of balls $B_j = B_{\eps_j}(x_j) \subseteq M$ of radius
$\eps_j \le r_1$ such that
\begin{equation}\label{propcovering}
\mathrm{lim}_0\,\varphi \subseteq \bigcup_j B_j \qquad \text{and} \qquad \left|
\sum_j \Psi(\eps_j)\right| \le r_1.
\end{equation}
For notational convenience, set $b_1= 8\sqrt{r_1}$. Consider $\lim_1\varphi =
\lim\varphi \cap \big(D\backslash B_{b_1/2}(\partial D) \big)$. This set is
compact and contained in $\lim_0\varphi$. By compactness, we can select a finite
subcovering $\{B_j\}_{j=1}^{k_1}$ of balls touching $\lim_1\varphi$, in such a
way that the covering is contained in $B_{2r_1}(\lim_0\varphi)\subseteq
B_{R_0}$. We can suppose $k_1 \ge 2$. Clearly, the second property in
\eqref{propcovering} still holds for the subcovering.
Define the compact set
$$
K_{r_1} = M \backslash \left(\varphi^{-1}\Big(\bigcup_{j=1}^{k_1} B_j\Big) \cup
\varphi^{-1}\big( B_{b_1}(\partial D)\big) \right)
$$
For each $j$, we choose $a_j=\eps_j\le r_1$. Since $x_j \in B_{R_0}$, we can
apply Lemma \ref{basicconstr} with the choices $G=B^2$ (so that $h(s)=s$ if
$B=0$, $h(s) = B^{-1} \sinh(Bs)$ if $B>0$), $x=x_j$, $R=2R_0$ and $S(t) =
\max\{t,1\}^{-\theta -2}$ to deduce the existence of constants $\bar a, C>0$
($C$ only depending on $m,R,\theta$ and on $h_{|_{[0,2R_0]}}$) such that, if
$r_1\le \bar a$, there exists $u_j = u_{x_j}$ with the properties
\begin{equation}\label{propux3b}
\left\{\begin{array}{l}
u_j \ge 0, \quad  u_j(p) =0 \ \text{ if and only if } \varphi(p)=x_j, \\[0.2cm]
\|u_j\|_{L^\infty} \le  C \Psi(\eps_j) \\[0.3cm]
\Delta u_j > 0 \ \text{ on } M, \quad \Delta u_j \ge (\theta+1)/2 \ \text{ on }
\varphi^{-1}(B_j).
\end{array}\right.
\end{equation}
We consider the case $\lim \varphi \cap \partial D \neq \emptyset$ in
\eqref{ipocurvatmedia}, the other case being easier. Define $u_\infty =
b_1F(\varphi(x))$. Then, $u_\infty \le 0$ on $M$. By formula
\eqref{formulalapla} with $f=F$ and the $m$-convexity of $D$ with constant $c$
we deduce that, on the whole $M$,
\begin{equation}\label{laparteconF}
\begin{array}{lcl}
\Delta u_\infty & = & \disp b_1\Delta (F \circ \varphi) \ge
b_1\left(\sum_{j=1}^m \overline \nabla \di F(e_j, e_j) - m\|\overline \nabla
F\|_{L^\infty(D)}\|H\| \right)\\[0.4cm]
& \ge & b_1\Big(\disp c - m\|\overline \nabla F\|_{L^\infty(D)}\|H\|\Big) \ge
b_1 \overline C,
\end{array}
\end{equation}
for some positive constant $\overline C$. Set
$$
w_1 = \sum_{j=1}^{k_1} (2\|u_j\|_{L^\infty}-u_j) - u_\infty.
$$
Note that, since $k_1 \ge 2$, $w_1$ is strictly positive on $M$ by construction.
For every $p \in M\backslash K_{r_1}$, either
$$
(1) \ \ \ \ \varphi(p) \in B_{b_1}(\partial D) \qquad \text{or} \qquad (2) \ \ \
\ \varphi(p) \in \bigcup_{j=1}^{k_1} B_j,
$$
and the two cases are not mutually excluding. We first examine $(1)$. In this case,
since $F = 0$ on $\partial D$ and is Lipschitz on $D$,
$$
|u_\infty(p)| \le b_1^2 \|\overline \nabla F\|_{L^\infty(D)},
$$
whence by \eqref{propcovering}, \eqref{propux3b} and the fact that $b_1^2 =
64r_1$ we obtain
\begin{equation}\label{caso1}
\begin{array}{lcl}
\disp -\frac{\Delta w_1}{w_1}(p) & \ge & \disp \frac{\Delta
u_\infty(p)}{2\left(\sum_j\|u_j\|_{L^\infty}\right)+ |u_\infty(p)|} \ge
\frac{b_1 \overline C}{C\left(\sum_j \Psi(\eps_j)\right) + b_1^2\|\overline
\nabla F\|_{L^\infty(D)}} \\[0.6cm]
& \ge & \disp C\frac{b_1}{r_1 + b_1^2} \ge \frac{C}{b_1} = \frac{C}{\sqrt{r_1}}
\end{array}
\end{equation}
for some constant $C>0$ depending on $m, R_0, \theta, F, h_{|_{[0,2R_0]}}$
(hereafter, we will say that it depends on the data of the immersion) but not on
$r_1$ or on the covering $\{B_j\}$, and that may vary from line to line in
\eqref{caso1}.\\
We now examine case $(2)$. We split $\{1,\ldots, k_1\}$ in two subsets
$$
J_1^p = \big\{ j \in \{1,\ldots, k_1\} : \varphi(p) \in B_j \big\}, \qquad J_2^p
= \big\{ j \in \{1,\ldots, k_1\} : \varphi(p) \in N \backslash B_j \big\}.
$$
By construction and by $(2)$, $J_1^p \neq \emptyset$. Then, again by
\eqref{propcovering}, \eqref{propux3b}, and since $b_1 = 8\sqrt{r_1} \ge 8r_1$
by our choice of $r_1$, it holds
\begin{equation}\label{caso2}
\begin{array}{lcl}
\disp -\frac{\Delta w_1}{w_1}(p) & \ge & \disp \frac{\sum_{J^p_1 \cup J^p_2}
\Delta u_j(p)}{2\left(\sum_j\|u_j\|_{L^\infty}\right) + \|u_\infty\|_{L^\infty}}
\ge \frac{\sum_{J^p_1} \Delta u_j(p)}{C\left(\sum_j \Psi(\eps_j)\right) + b_1
\|F\|_{L^\infty(D)}} \\[0.6cm]
& \ge & \disp \frac{(\theta+1)|J^p_1|}{C (r_1 + b_1)} \ge \frac{C}{b_1} =
\frac{C}{\sqrt{r_1}}
\end{array}
\end{equation}
for some $C>0$ depending, as before, on the data of the immersion but not on
$r_1$ or on the covering $\{B_j\}$. Summarizing, there exists $C>0$ only
depending on the data of the immersion such that
\begin{equation}\label{primostep}
- \frac{\Delta w_1}{w_1} \ge \frac{C}{\sqrt{r_1}} \qquad \text{on } K_{r_1}.
\end{equation}
Now, choose a positive $r_2< \min\{\frac{r_1}{2}, \frac{b_1}{16}, \frac 12\}$
and set $b_2= 8\sqrt{r_2}$.

Again, we can find a countable covering $\{B_j\}$ of balls $B_j =
B_{\eps_j}(x_j) \subseteq M$ of radius $\eps_j \le r_2$ such that
\begin{equation}
\mathrm{lim}_0\,\varphi \subseteq \bigcup_j B_j \qquad \text{and} \qquad \left|
\sum_j \Psi(\eps_j)\right| \le r_2.
\end{equation}
Consider the compact set $\lim_2\varphi = \lim\varphi \cap \big(D\backslash
B_{b_2/2}(\partial D) \big)$, contained in $\lim_0\varphi$ and containing
$\lim_1\varphi$. By compactness, we can select a finite subcovering
$\{B_j\}_{j=1}^{k_2}$ of balls touching $\lim_2\varphi$, so that the subcovering
is contained in $B_{2r_2}(\lim_2\varphi)$. Since $b_2 \ge 8\sqrt{r_2} \ge 8r_2$,
$B_{2r_2}(\lim_2\varphi)\subseteq D$. Moreover, by our choice of $r_2$,
$$
B_{2r_2}(\mathrm{lim}_2\varphi) \Subset B_{2r_1}(\mathrm{lim}_1\varphi) \cup
B_{b_1}(\partial D).
$$
Consequently, the compact set
$$
K_{r_2} = M \backslash \left(\varphi^{-1}\Big(\bigcup_{j=1}^{k_2} B_j\Big) \cup
\varphi^{-1}\big( B_{b_2}(\partial D)\big) \right)
$$
satisfies $K_{r_1} \subseteq \mathrm{int}(K_{r_2})$. The same construction
procedure as above can be repeated verbatim, yielding a superharmonic function
$w_2$ on $M$ such that
$$
- \frac{\Delta w_2}{w_2} \ge \frac{C}{\sqrt{r_2}} \qquad \text{on } M \backslash
K_{r_2},
$$
for the same constant $C$ as in \eqref{primostep}, only depending on the data of
the immersion. Inductively, if at each step we select a positive $r_{l+1}<
\min\{\frac{r_l}{2}, \frac{b_l}{16}, 2^{-l}\}$ and proceed to find $K_{r_{l+1}}$
satisfying $\mathrm{int} (K_{r_{l+1}}) \supseteq K_{r_l}$, and a positive
$w_{l+1}$ solving $-\Delta w_{l+1}/w_{l+1} \ge C/\sqrt{r_{l+1}}$ on $M\backslash
K_{r_{l+1}}$. Note that, although $r_l \downarrow 0^+$, we cannot infer that
$K_{r_l}$ is an exhaustion of $M$ because $\lim\varphi$ could actually contain
points of $\varphi(M)$. However, by Persson formula and Barta inequality, for
each $l$
$$
\inf \sigma_\ess(-\Delta) \ge \lambda^*(M\backslash K_{r_l}) \ge
\inf_{M\backslash K_{r_l}} \left(-\frac{\Delta w_l}{w_l}\right) \ge
\frac{C}{\sqrt{r_l}} \ra +\infty
$$
as $l\ra +\infty$, which concludes the proof.

\section{Remarks on $\sigma_\ess(-\Delta)$ and open
questions}\label{sec_sharpness}

We conclude this paper with some observations on the links between $\sigma_\ess
(-\Delta)$ and the topology of the limit set of an immersion, and with some open
problems. First, we prove Proposition \ref{lem_crite} of the Introduction, that is, that the ball property in Definition \ref{def_ball} implies the existence of essential spectrum.

%\begin{proposition}\label{lem_crite}
%Let $(M, \metric)$ be a Riemannian manifold. Suppose that there exists a
%collection of disjoint balls $\{B_R(x_j)\}_{j=1}^{+\infty}$ of radius $R$ for
%which, for some constants $C>0$, $\delta \in (0, 1)$ possibly depending on $R$,
%\begin{equation}\label{desig1}
%\vol \big(B_{\delta R}(x_j)\big) \ge C^{-1} \vol\big(B_R(x_j)\big) \qquad
%\forall \, j \in \N
%\end{equation}

%Then,
%\begin{equation}\label{ifsigmaess}
%\inf \sigma_\ess(-\Delta) \le \lambda^* =\frac{C}{R^2(1-\delta)^2}.
%\end{equation}
%
%\end{proposition}

\begin{proof}[Proof of Proposition \ref{lem_crite}]
For each $j$, define the compactly supported, Lipschitz function $\varphi_j(x)=
\psi(\rho_j(x))$, where $\rho_j(x) = \mathrm{dist}(x, x_j)$ and
\begin{equation}
 \psi(t)=\left\{\begin{array}{ll}
                     1,& t \leq \delta R\\[0.1cm]
                     \frac{R - t}{R(1-\delta)} & t \in \left[\delta R,R\right]
\\[0.1cm]
                     0 & t \ge R
                    \end{array}\right. \quad  \Longrightarrow \quad |\psi'| \le \frac{1}{R(1-\delta)}.
\end{equation}

Then, by the ball property \eqref{desig1},
\begin{eqnarray}
I_\lambda(\phi_j,\phi_j) &=&\int_{B_R(x_j)}
|\nabla \phi_j|^2-\lambda\int_{B_R(x_j)}\phi_j^2 \leq
\frac{\vol(B_j)}{R^2(1-\delta)^2} - \lambda\text{vol}
\big(B_{\delta R}(x_j)\big) \\[0.2cm]
&\leq & \vol(B_R) \left( \frac{1}{R^2(1-\delta)^2} - \lambda C^{-1}\right) <0
\end{eqnarray}
provided that $\lambda > \lambda^*= \frac{C}{R^2(1-\delta)^2}$. Since $\{\phi_j\}$ span an
infinite-dimensional subspace of the domain of $-\Delta$, the (Friedrichs
extension of) the operator $-\Delta -\lambda$ has infinite index, or
equivalently $-\Delta$ has infinite eigenvalues below $\lambda$, for each
$\lambda >\lambda^*$. By the min-max theorem (see for instance
\cite{reedsimon3}, or Section 3 in \cite{prs} for a concise account), the
inequality $\inf\sigma_\ess(-\Delta) \le \lambda^*$ follows at once.
\end{proof}

\begin{remark}\label{rem_bishopgromov}
\emph{In virtue of Bishop-Gromov volume comparison theorem, the ball property in Definition \ref{def_ball} is met, for instance, when the Ricci curvature of $B_R(x_j)$ is
uniformly bounded from below by a constant, say $-(m-1)H^2<0$, $m= \dim M$.
Indeed, denoting with $\vol_H(r)$ the volume of a geodesic ball of radius $r$ in
the hyperbolic space $M^m_H$ of sectional curvature $-H^2$, by Bishop-Gromov
theorem $\vol(B_r(x_j))/\vol_H(r)$ is non-increasing on $[0,R]$. Hence, for each
$\delta >0$
$$
\vol\big(B_{\delta R}(x_j)\big) \ge \frac{\vol_H(\delta R)}{\vol_H(R)}
\vol\big(B_R(x_j)\big) = C(\delta,R)^{-1} \vol\big(B_R(x_j)\big).
$$
}
\end{remark}

As already underlined, the independence of Proposition \ref{lem_crite} from curvature requirements makes it
suited to investigate minimal surfaces, particularly those described in the
Introduction. The reason is that the approach via Runge's approximation theorem,
shared by all these constructions, guarantees a $C^0$-control of the metric
(hence, of lengths and volumes) in some known regions, while it seems more
difficult to control curvatures in the same region. We now come to the proof of Proposition \ref{prop_jx}.

\begin{proof}[Proof of Proposition \ref{prop_jx}]
Let $K_n$ the compact sets used in the construction of
\cite{jorge-xavier-annals}, let $r_n$ denote the Euclidean distance between the
inner and the outer circle of $K_n$, and let  $p_n\in K_n$ be the point in the
middle of the segment of the real axis crossed by $K_n$. By construction, $r_n
\ra 0^+$ as $n \ra +\infty$, and $\di s^2$ is $C^0$-close to a multiple of the
Euclidean metric $\di s^2_E$ on $K_n$, more precisely $\di s^2 = \lambda^2 \di
s^2_E$ with
$$
\lambda =\frac 12 \left( |e^h|+ |e^{-h}|\right), \quad \text{for some
holomorphic $h$ on $K_n$ with }  |h - c_n| < 1 \ \text{ on } K_n,
$$
$c_n$ being some positive constant chosen in such a way that
\begin{equation}\label{condicn}
\sum_{n=1, \, n \text{ even}}^{+\infty} r_n e^{c_n-1} = +\infty, \qquad
\sum_{n=1, \, n \text{ odd}}^{+\infty} r_n e^{c_n-1} = +\infty.
\end{equation}
The line elements thus satisfy $C_n \di s_E \le \di s \le e^2 C_n \di s_E$,
where $C_n= (e^{c_n-1} + e^{-c_n-1})/2$. Consequently, every curve passing $K_n$
from the inner to the outer circle has length at least $r_ne^{c_n-1}/2$. The
choice $c_n = r_n$ guarantees both \eqref{condicn} and the following property
that, for each fixed $R>0$, one can find $n_R$ large enough such that
$B_R(p_n)\subseteq K_n$ for each $n \ge n_R$. By the relation of the line
elements, denoting with $\Bo_\rho(z_j)$ the Euclidean ball of radius $\rho$
centered at $z_j$, $\Bo_{Re^{-2}/C_n}(z_j) \subseteq B_R(z_j) \subseteq
\Bo_{R/C_n}(z_j)$, thus
\begin{equation}\label{compa}
\vol\big(B_{\frac R2}(p_n)\big) \ge C_n^2
\vol_E\big(\Bo_{\frac{R}{e^{2}2C_n}}(p_n)\big), \quad \vol\big(B_{R}(p_n)\big)
\le e^4 C_n^2 \vol_E\big(\Bo_{\frac{R}{C_n}}(p_n)\big).
\end{equation}
By the doubling property of Euclidean space $\R^3$, it is immediate from
\eqref{compa} that \eqref{desig1} holds for $\di s^2$ with $\delta=1/2$ and a
suitable $C$ independent of $n$, but even of $R$. Therefore, by Proposition
\ref{lem_crite}, there exists an absolute constant $C>0$ such that
$\inf\sigma_\ess(-\Delta) \le C/R^2$. Since $R$ can be chosen to be arbitrarily
large, the thesis follows.
\end{proof}

Now, let $\varphi : M \ra N$ be an isometric immersion. The ball property \eqref{desig1}
can also be deduced from suitable convergences of pieces of $\varphi(M)$ in $N$.
We formalize this fact in the next

\begin{definition}\label{def_disk}
Let $\varphi : M^m \ra N^n$ be an isometric immersion with $\lim\varphi \neq
\emptyset$. We will say that $\varphi$ has the \emph{\textbf{extrinsic ball property}} if there is a point
$p\in \lim \varphi$, a bounded subset with compact closure $\Sigma_p\subset\lim
\varphi$ containing $p$ and diffeomorphic to the unit ball $B_1(0) \subseteq
\R^m$, and a sequence of disjoint balls
$B_j=B_{R}(x_j)\subseteq M^m$ such that $\varphi(B_j)\to\Sigma_p$ in the
$C^0$-norm.
\end{definition}

\begin{corollary}\label{cor_disk}

If $\varphi : M^m \ra N^n$ has the extrinsic ball property, then $M$ has the ball property. In particular, $\sigma_\ess(-\Delta)$ is
non-empty.
\end{corollary}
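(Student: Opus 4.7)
My plan is to verify the volume comparison \eqref{desig1} of Definition \ref{def_ball} directly for the balls $B_j = B_R(x_j) \subseteq M$ supplied by the extrinsic ball property. Once this is established, Proposition \ref{lem_crite} immediately yields $\inf\sigma_\ess(-\Delta) < +\infty$, hence in particular $\sigma_\ess(-\Delta) \neq \emptyset$.

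The driving idea is to exploit the $C^0$-convergence $\varphi(B_j) \to \Sigma_p$ to transfer the fixed Riemannian geometry of $\Sigma_p$ (viewed as a $C^1$-submanifold of $N$ carrying the metric $g_{\Sigma_p}$ induced via $\iota_{\Sigma_p} \colon \Sigma_p \hookrightarrow N$) onto the sequence $\{B_j\}$. First, I would argue that for $j$ sufficiently large $\varphi|_{B_j}$ is an embedding whose image is a $C^0$-small graph over $\Sigma_p$, so that one can extract diffeomorphisms $\Phi_j \colon \Sigma_p \to B_j$ with $\varphi \circ \Phi_j \to \iota_{\Sigma_p}$ in $C^0$-norm. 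This is equivalent to the uniform convergence of the pulled-back metrics $\Phi_j^*g_M = \Phi_j^*(\varphi^*\langle\,,\,\rangle_N)$ to $g_{\Sigma_p}$ on the compact set $\overline{\Sigma_p}$.

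Uniform convergence of metric tensors on a compact set gives uniform convergence of the associated distance functions, and hence Hausdorff convergence of the intrinsic metric balls. Consequently, setting $q_j = \Phi_j^{-1}(x_j)$ and (after passing to a subsequence) $q_j \to q_\infty \in \overline{\Sigma_p}$, for each $r \in (0, R]$ one has
\[
\vol\big(B_r(x_j)\big) \;\longrightarrow\; \vol_{g_{\Sigma_p}}\!\big(B_r^{g_{\Sigma_p}}(q_\infty) \cap \Sigma_p\big) \qquad \text{as } j \to +\infty.
\]
Fixing $\delta = 1/2$ (or any fixed $\delta \in (0,1)$ for which the limit volume at radius $\delta R$ is positive), the ratio $\vol(B_{\delta R}(x_j))/\vol(B_R(x_j))$ tends to a strictly positive constant, and discarding finitely many indices produces a constant $C > 0$ for which \eqref{desig1} holds. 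Proposition \ref{lem_crite} then concludes.

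The main obstacle I anticipate is the unpacking of the rather terse Definition \ref{def_disk}: one must extract from the $C^0$-convergence both the reparametrizations $\Phi_j$ and the uniform convergence $\Phi_j^* g_M \to g_{\Sigma_p}$ of pulled-back metrics. A small care is also needed to ensure that $q_\infty$ is not a boundary point of $\Sigma_p$ against which the limit ball degenerates; since $\Sigma_p$ has interior and $R$ is fixed, choosing the center along a subsequence that stays at uniformly positive $g_{\Sigma_p}$-distance from $\partial \Sigma_p$ resolves the issue. Once this is in place, convergence of intrinsic volumes is a standard consequence of the stability of Riemannian measures under $C^0$-convergence of metric tensors on compacta.
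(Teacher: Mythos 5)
Your proposal is correct and rests on the same core mechanism as the paper's proof -- transfer a volume-ratio estimate from the fixed compact set $\Sigma_p$ to the balls $B_j$ by exploiting the $C^0$-convergence $\varphi(B_j)\to\Sigma_p$ -- but the way you obtain the estimate on $\Sigma_p$ is genuinely different. The paper takes a curvature lower bound $R_\Sigma\ge -c$ (available because $\overline{\Sigma_p}$ is compact) and invokes Bishop--Gromov, which immediately gives a constant $C=C(R)$ uniform over all centers in $\Sigma_p$; the $C^0$-convergence then yields $\vol(B_{R/2}(x_j))\ge (C/2)\vol(B_R(x_j))$ for all $j\ge j_0$ without any need to pass to a subsequence. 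You instead avoid curvature considerations entirely, pass to a subsequence along which the centers $q_j=\Phi_j^{-1}(x_j)$ converge, and use continuity of the Riemannian volume under $C^0$-convergence of metrics on the compactum $\overline{\Sigma_p}$ to deduce that the ratio $\vol(B_{\delta R}(x_j))/\vol(B_R(x_j))$ tends to a positive number. This is perfectly legitimate -- passing to a subsequence of the $B_j$ is harmless, since a subsequence of disjoint balls is still an infinite disjoint family and Definition~\ref{def_ball} asks for nothing more -- and it is arguably more elementary and needs less regularity of $\Sigma_p$. The trade-off is that you must handle the possibility that the centers drift to $\partial\Sigma_p$; your fix (restricting to a subsequence at uniformly positive distance from the boundary, or simply observing that the limiting ball of radius $\delta R$ meets the open set $\Sigma_p$ in a set of positive volume) is adequate. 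One caveat shared by both arguments, not specific to yours: both tacitly treat the ``$C^0$-convergence'' of Definition~\ref{def_disk} as strong enough to imply convergence of pullback metrics and hence of volumes, which strictly speaking is a $C^1$-type statement. Since the paper's own proof relies on the same reading, this is a feature of the definition rather than a gap in your argument.
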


\begin{proof}

Let $c>0$ be such that the curvature tensor $R_\Sigma$ of $\Sigma_p$ satisfies
$R_\Sigma \ge -c$. Then, by Bishop-Gromov theorem, there exists $C=C(R)$ such
that $\vol(B_{R/2}(x)) \ge C \vol(B_R(x))$. By $C^0$-convergence, there exists
$j_0$ such that
$$
\vol\big(B_{R/2}(x_j)\big) \ge \frac C2 \vol\big(B_R(x_j)\big) \qquad \text{for
each } j \ge j_0,
$$
thus $M$ has the ball property, and non-empty essential spectrum thanks to Proposition \ref{lem_crite}.
\end{proof}

We underline that, if $\varphi : M^m \ra N^n$ has the extrinsic ball property, then
$\dim_\haus(\lim\varphi) \ge m$. The extrinsic ball property is closely related to the
existence of immersions into $\lim \varphi$ that are generated by $\varphi$
itself. Indeed, we report here the following well-known convergence result:

\begin{lemma}
 Let $M^m_j$ a sequence of complete smooth manifolds, all with injectivity
radius $\mathrm{inj}(M_j) \ge \epsilon_0>0$ and isometrically immersed in a
complete
manifold $N^n$ via $\varphi_j : M_j \ra N$. Let $\Omega_1 \Subset \Omega_2
\Subset N$ be two open, relatively compact sets of $N$. Assume that $\Omega_1$
intersects all $\varphi_j(M_j)$ and that
the second fundamental forms of all $\varphi_j(M_j)\cap \Omega_2$ are uniformly
bounded. Then, there exists a sufficiently small $R>0$ such that
\begin{enumerate}
 \item If $\{x_j\}$, $x_j \in M_j$ is such that $p_j= \varphi_j(x_j) \in
\Omega_1,\; p_j\to p$ and $\di \varphi_j(T_{x_j}M_j)\to \Pi \subseteq T_pN$,
then $\varphi_j$ restricted to the ball $\Sigma_j=B_R(x_j)$ is an embedding such
that $\varphi_j(\Sigma_j)$ converges in the $C^\infty$ topology to an embedded
$m$-ball $\Sigma\ni p$, and $T_p\Sigma=\Pi$.
\item If $M_j=M$ and $\varphi_j=\varphi$ for every $j$, and if $p\in\lim
\varphi$, $M$ has the extrinsic ball property and we can take
$\Sigma_p\subseteq\lim\varphi$.
\item In addition, if each $M_j$ is a complete, simply connected
manifold and the norms of the second fundamental forms $II_j$ of $\varphi_j$
satisfy the following property:
$$
\forall \, R>0, \ \exists \, C_R>0 \, \text{ such that } \,
\left|(II_j)_{\varphi_j^{-1}(B_R(p))}\right|^2 \le C_R \, \text{ for each } j,
$$
then there is an isometric immersion of a complete, simply
connected manifold $M_p$ into $N$ build by convergence over compact sets. In
the case when $M_j=M$, $\varphi_j=\varphi$ for each $j$, and $p \in
\lim\varphi$, we have $M_p\subseteq \lim\varphi$.
\end{enumerate}

\end{lemma}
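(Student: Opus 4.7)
The argument is a standard compactness-convergence theorem for immersed submanifolds of bounded second fundamental form, plus a gluing step for part (3); I would organize it in four steps.

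\emph{Step 1: local graph representation.} Fix a uniform bound $\|II_j\| \le C$ on $\varphi_j(M_j)\cap \Omega_2$ and a uniform lower bound $\epsilon_0$ on $\mathrm{inj}(M_j)$. Choose $R>0$ smaller than $\epsilon_0$, smaller than the injectivity radius of $N$ at every point of $\Omega_2$, and smaller than a constant depending on $C$ that controls how much the tangent plane $d\varphi_j(T_{y}M_j)$ can tilt as $y$ moves in $B_R(x_j)\subset M_j$. With this choice, I use the exponential map $\exp_{p_j}$ of $N$ and write $\varphi_j(B_R(x_j))$ as the graph of a function $u_j\colon \Pi_j \cap B_R(0) \to \Pi_j^\perp$, where $\Pi_j = d\varphi_j(T_{x_j}M_j)$. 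The bound on $\|II_j\|$ translates to a uniform $C^{1,1}$ bound on $u_j$ (together with $u_j(0)=0$, $\nabla u_j(0)=0$), which in particular shows that $\varphi_j|_{B_R(x_j)}$ is an embedding.

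\emph{Step 2: Arzelà-Ascoli and convergence to $\Sigma$.} Because $p_j \to p$ and $\Pi_j \to \Pi$, we may, up to a smooth change of coordinates, regard the $u_j$ as graphs over a common plane $\Pi$ in a common chart at $p$. The uniform $C^{1,1}$ bounds and Arzelà-Ascoli yield a subsequence converging in $C^{1,\alpha}$ to a limit $u_\infty$, whose graph is an embedded $m$-ball $\Sigma \ni p$ with $T_p\Sigma = \Pi$. To upgrade convergence to $C^\infty$, I would use the quasi-linear system satisfied by the $u_j$ as isometric graphs (so that the induced metric and the prescribed $II_j$ couple into an elliptic system for $u_j$): combined with the interior bounds on covariant derivatives of $II_j$ that accompany the bound on $\|II_j\|$ in this context, standard Schauder bootstrap gives $C^k$ estimates for every $k$, hence $C^\infty$ convergence on slightly smaller balls. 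This proves (1).

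\emph{Step 3: deducing (2).} Given $p \in \lim\varphi$ there is a divergent sequence $q_n \in M$ with $\varphi(q_n) \to p$. Since the Grassmannian of $m$-planes in $T_pN$ is compact, after passing to a subsequence $d\varphi(T_{q_n}M) \to \Pi$ for some $\Pi \subseteq T_pN$. Apply (1) to $x_j = q_n$: the balls $B_R(q_n)\subseteq M$ are eventually pairwise disjoint because $q_n$ is divergent in $M$ (pass once more to a subsequence so that consecutive $q_n$ are more than $2R$ apart in $M$), and $\varphi(B_R(q_n))\to \Sigma\subseteq \lim\varphi$ in the $C^0$-norm. Hence $M$ has the extrinsic ball property of Definition \ref{def_disk} with $\Sigma_p=\Sigma$.

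\emph{Step 4: part (3) and main obstacle.} For the last item, pick an exhaustion $\Omega_1 \Subset \Omega_2 \Subset \cdots \Subset N$ by relatively compact open sets, and apply Step 1--Step 2 on each $\Omega_k$ with the hypothesized $II_j$-bound $C_{R_k}$. A diagonal subsequence gives simultaneous local $C^\infty$ convergence on every compact subset of $N$. The delicate step—and what I expect to be the main obstacle—is packaging these local limits into a \emph{single} complete, simply connected manifold $M_p$ with an isometric immersion into $N$: one must show that the local pieces can be glued consistently, using the uniform lower injectivity radius bound to produce a covering of $M_p$ by charts whose transition maps are limits of transition maps on $M_j$, and using simple connectedness of each $M_j$ together with the monodromy principle to lift this construction coherently. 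Once $M_p$ is built, if $M_j = M$ and $p \in \lim\varphi$, the base points $x_j \to p$ being divergent in $M$ and the local embedding property force $M_p \subseteq \lim\varphi$, concluding the proof.
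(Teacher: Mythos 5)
The paper does not actually prove this lemma: it is cited as a ``well-known convergence result'' and is followed only by a one-paragraph remark sketching ``one idea to prove this result'' --- namely, represent each $\varphi_j(M_j)$ locally as a graph (section of the normal bundle) over its tangent plane via the normal exponential map, use convergence and regularity theory for the elliptic system prescribing the mean curvature vector, and exploit simple connectedness to avoid period problems in the gluing step. Your four-step plan is exactly a fleshed-out version of that same sketch: your Step~1 is the graph representation, your Step~2 is the elliptic compactness/regularity, your Step~4 is the monodromy/period argument the paper alludes to, and Step~3 is a straightforward specialization to deduce item (2). So the route is the same as the paper's.

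One point in Step~2 deserves sharper wording. You assert that ``interior bounds on covariant derivatives of $II_j$ accompany the bound on $\|II_j\|$,'' but this is not automatic: a uniform bound on $\|II_j\|$ alone yields only $C^{1,\alpha}$ (indeed $C^{1,1}$) control of the graph functions $u_j$, hence only $C^{1,\alpha}$ convergence after Arzel\`a--Ascoli. To upgrade to $C^\infty$ one needs a PDE the $u_j$ satisfy --- in practice, a bound on the mean curvature vector and its derivatives (in the paper's applications $H \equiv 0$), so that the quasilinear mean curvature system plus Schauder bootstrap gives all higher interior estimates. Your phrase about prescribing $II_j$ is slightly off (the full $II$ is not the source term of an elliptic second-order system; the mean curvature is), though your parenthetical mention of the ``prescribed'' quantity coupling into an elliptic system suggests you have the right mechanism in mind. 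Note that this is an imprecision shared by the paper's own statement --- the lemma as written only hypothesizes a bound on $\|II_j\|$, yet claims $C^\infty$ convergence, which implicitly presupposes the mean curvature is controlled; the paper's remark makes this clear by explicitly invoking ``the system of elliptic PDEs describing the mean curvature vector.'' With that caveat spelled out, your argument is sound and matches the intended proof.
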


\begin{remark}
\emph{One idea to prove this result is to find $\eps'>0$ such that each
$\varphi_j(M_j)$ can be written as the image of a section of the normal bundle
over the tangent plane at some point $p_j \in \varphi_j(M_j) \cap \Omega_2$ via
the normal exponential map. Then, use the convergence and regularity theory for
the system of elliptic PDEs describing the mean curvature vector. Simply
connectedness is
important to avoid period problems.
}
\end{remark}

A particular case when the extrinsic ball property holds is when $\varphi$ has locally bounded geometry, in the sense of the following

\begin{definition}
An isometric immersion $\varphi:M^m\to N^n$ has locally bounded geometry if,
for each compact set $K\subseteq N$, then $\varphi|_{\varphi^{-1}(K)}$ has
bounded second fundamental form.
\end{definition}

\begin{corollary}
Let $\varphi:M^m\to N^n$ be an isometric immersion between complete manifolds.
If
$\lim\varphi\neq \emptyset$, $\varphi$ has locally bounded geometry and
$\mathrm{inj}(M) >0$, then $\varphi$ has the extrinsic ball property. In particular, $M$
has non-empty essential spectrum.
\end{corollary}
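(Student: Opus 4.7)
The plan is to combine the locally bounded geometry and the positivity of $\mathrm{inj}(M)$ with the convergence lemma stated just above, and then apply Corollary \ref{cor_disk}.

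First, since $\lim\varphi \neq \emptyset$, pick $p \in \lim\varphi$. By definition, there exists a divergent sequence $\{y_k\} \subset M$ such that $\varphi(y_k) \to p$ in $N$. Choose a relatively compact neighborhood $\Omega_2 \Subset N$ of $p$, together with a smaller neighborhood $\Omega_1 \Subset \Omega_2$ also containing $p$. Up to passing to a tail, $\varphi(y_k) \in \Omega_1$ for all $k$. Because $\varphi$ has locally bounded geometry, the second fundamental form of $\varphi$ is uniformly bounded on $\varphi^{-1}(\overline{\Omega_2})$, and hence in particular at each $y_k$ (and on a neighborhood thereof). Together with the hypothesis $\mathrm{inj}(M) \ge \epsilon_0 > 0$, this puts us precisely in the setting of the convergence lemma (with $M_j = M$ and $\varphi_j = \varphi$ for every $j$).

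Next, I would extract a subsequence with good properties. Since $T_pN$ is finite dimensional, by compactness of the Grassmannian $\mathrm{Gr}(m, T_pN)$ I can pass to a subsequence (still denoted $\{y_k\}$) such that $\di\varphi(T_{y_k}M) \to \Pi$ for some $m$-plane $\Pi \subseteq T_pN$. The convergence lemma, part (1), then produces a radius $R > 0$ (independent of $k$) such that $\varphi$ restricted to $B_R(y_k)$ is an embedding and $\varphi(B_R(y_k))$ converges in the $C^\infty$ topology to an embedded $m$-ball $\Sigma_p$ through $p$ with $T_p\Sigma_p = \Pi$. By part (2) of the same lemma, $\Sigma_p \subseteq \lim\varphi$.

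Now I need the balls $B_R(y_k)$ to be pairwise disjoint. Since $\{y_k\}$ is divergent in $M$, I can inductively pass to a further subsequence $\{x_j\}$ with $\mathrm{dist}_M(x_j, x_\ell) > 2R$ for $j \neq \ell$; this is possible because only finitely many $y_k$ lie in the closure of any given ball of $M$. The balls $B_R(x_j)$ are then disjoint, and $\varphi(B_R(x_j)) \to \Sigma_p$ in the $C^0$-norm (since $C^\infty$-convergence of sections of the normal bundle implies $C^0$-convergence of images). Choosing $\Sigma_p$ as the distinguished subset, this is exactly the extrinsic ball property of Definition \ref{def_disk}.

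Finally, Corollary \ref{cor_disk} yields that $M$ has the ball property and therefore $\sigma_\ess(-\Delta)$ is non-empty. The main technical obstacle is the simultaneous extraction of a subsequence whose tangent planes converge (so that the convergence lemma applies) and which is sufficiently separated in $M$ to yield disjoint balls; both are easy once one observes that $\{y_k\}$ is divergent and the Grassmannian is compact, but the order in which these two extractions are performed must be chosen with care.
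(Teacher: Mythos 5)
Your proof is correct and is exactly the paper's intended argument: the corollary is stated without proof and is meant to follow from the convergence lemma (parts (1)--(2)), the definition of locally bounded geometry, and Corollary~\ref{cor_disk}, which is precisely what you assemble, including the subsequence extraction to obtain disjoint balls, a step the lemma's statement leaves implicit but which Definition~\ref{def_disk} requires. One small notational point: since $\di\varphi(T_{y_k}M)$ sits inside the varying tangent space $T_{\varphi(y_k)}N$, the compactness argument is most cleanly phrased in the Grassmannian bundle $\mathrm{Gr}(m,TN)$ restricted to the compact set $\overline{\Omega_1}$ rather than in the single fiber $\mathrm{Gr}(m,T_pN)$, though this does not affect the conclusion.
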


We conclude by exhibiting an example of a bounded, minimal surface whose
essential spectrum is non-empty, whose limit set has big Hausdorff dimension and
which is not a covering. This, again, shows the sharpness of Theorem
\ref{teo_maindim2}, and introduces us to some open questions.

\begin{example}\label{ex_andrade}

\emph{We consider a portion of Andrade minimal surface, \cite{andrade}. We
recall its construction and some of its properties, referring the reader to
\cite{andrade} for full proofs. Choose $r_1, r_2 >0$ such that $r_1/r_2$ is
irrational and strictly less than $1$, and set $d = r_2-r_1$. Define the map
$\chi: \C \ra \R^3 = \C \times \R$, $\chi(z) = (L(z)- \overline{H(z)}, h(z))$,
for the following choice of holomorphic functions $L,H$ and harmonic function
$h$:
$$
L(z) = (r_1-r_2)e^z, \quad H(z)= -de^{\left(\frac{r_1}{r_2}-1\right)z}, \quad
h(z) = 4\left(\frac{d}{r_2}\right)^{1/2} \left|\frac{r_2}{r_1}\right||r_2-r_1|
\Re\left( ie^{\frac{r_1}{2r_2}z}\right),
$$
where $\Re$ means the real part. Then, a computation gives that
$$
|L'(z)| + |H'(z)| >0, \qquad L'H' = \left( \frac{\partial h}{\partial
z}\right)^2 \qquad \text{on } \C,
$$
which is a necessary and sufficient set of condition on $\chi$ to be a conformal
minimal immersion of $\C$ in $\R^3$. Restricting $\chi$ to the region $U = \{z=
u+iv \in \C : |u| <1\}$, we get a bounded, simply-connected minimal immersion
$\varphi = \chi_{|U}$. For each fixed $u \in (-1,1)$, $\varphi(u+iv)$ is a dense
immersed trochoid in the cylinder $\Gamma_u = \left[ B_{s_1(u)}\backslash
B_{s_2(u)} \right] \times (-l(u), l(u))$, where $s_1, s_2, l$ are explicit
functions of $u$ depending on $r_1$ and $r_2$.
Therefore, $\lim\varphi$ is dense in the open subset $\bigcup_{u \in (-1,1)}
\Gamma_u$ of $\R^3$, which gives $\dim_\haus(\lim\varphi)=3$. Moreover, the
induced metric $\di s^2$ satisfies
\begin{equation}\label{indu}
\di s^2 = \left( |L'|+|H'|\right)^2 |\di z|^2 = \left( |r_2-r_1|e^u + de^{\left(
\frac{r_1}{r_2}-1\right)u}\right)^2|\di z|^2 \ge 4(r_2-r_1)^2 |\di z|^2.
\end{equation}
Considering $z_k = 2ik \in U$, each of the unit balls $\mathbb{B}_1(z_k)
\subseteq U$ in the metric $|\di z|^2$ contains a ball $B_R(z_k)$ in the metric
$\di s^2$ of radius at least $R=2|r_2-r_1|$. Since the sectional curvature of
$\chi$ satisfies
$$
K = -c_1\left(e^{\left(1-\frac{r_1}{4r_2}\right)u} +
c_2e^{\left(\frac{3r_1}{4r_2}-1\right)u}\right)^{-4},
$$
for some positive constants $c_1,c_2$, and $1-\frac{r_1}{4r_2}$ and
$\frac{3r_1}{4r_2}-1$ have opposite signs, $\chi$ has globally bounded
curvature. In particular, $\{B_R(z_k)\}$ is a collection of disjoint balls in
$(U, \di s^2)$ with uniformly bounded sectional curvature, thus
$\sigma_\ess(-\Delta)$ on $(U, \di s^2)$ is non-empty by Proposition
\ref{lem_crite} and Remark \ref{rem_bishopgromov}
.}
\end{example}

\subsection*{Open problems}

\begin{itemize}
\item[$(1)$] As Theorem \ref{teo_main} shows, raising the dimension of $M$ does
not yield an improvement of the allowed Hausdorff dimension of the limit set.
However, from some point of view this fact seems of technical nature. It seems
to us reasonable to state the following
\begin{conjecture}
Let $\varphi : M^m \ra N^{n}$ a minimally immersed submanifold of dimension $m
\ge 3$ into an open, $m$-convex subset $D$ of a Cartan-Hadamard manifold $N$.
If $\haus^{m}(\lim\varphi \cap D) = 0$, then $-\Delta$ has discrete spectrum on
$M$.
\end{conjecture}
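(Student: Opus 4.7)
The plan is to adapt the Persson-plus-Barta scheme of Theorem \ref{teo_maingenerale}, replacing the subharmonic building blocks produced by Lemma \ref{basicconstr} with new ones whose $L^\infty$-norm on $M$ scales like $\epsilon^m$ rather than $\epsilon^2$. Since $\haus^m(\lim\varphi\cap D)=0$, for each $r>0$ we can cover $\lim\varphi\cap D$ by balls $\{B_{\epsilon_j}(x_j)\}$ with $\sum_j\epsilon_j^m<r$, extract a finite subcovering $\{B_{\epsilon_j}(x_j)\}_{j\le k_r}$ of $\lim\varphi\cap(D\setminus B_{b/2}(\partial D))$ with $b=8\sqrt r$, and form
$$
w_r=\sum_{j\le k_r}\bigl(2\|u_j\|_{L^\infty}-u_j\bigr)-bF\circ\varphi,
$$
where $F$ is the $m$-convexity potential of $D$. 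If the new bumps $u_j$ satisfy $u_j\ge0$, $\Delta u_j\ge0$ on $M$, $\Delta u_j\ge m/2$ on $\varphi^{-1}(B_{\epsilon_j}(x_j))$ and the strengthened bound $\|u_j\|_{L^\infty(M)}\le C\epsilon_j^m$, then the estimates in \eqref{caso1}--\eqref{caso2} go through verbatim and yield $-\Delta w_r/w_r\ge C/\sqrt r$ outside a suitable compact set $K_r$; iterating with $r_l\downarrow0$ and invoking Persson's formula together with Barta's inequality would give $\inf\sigma_\ess(-\Delta)=+\infty$, exactly as in the proof of Theorem \ref{teo_maingenerale}.

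The construction of the improved $u_j$ should exploit the higher-dimensional fact that the ambient Newton kernel pulls back to a $\Delta_M$-superharmonic function on any minimal $m$-submanifold with $m\ge3$. Indeed, for $N=\R^n$ and $f(y)=|y-x_0|^{2-m}$, a direct computation of the tangential Hessian trace gives
$$
\Delta_M(f\circ\varphi)=m(m-2)\,\rho^{-m}(s-1)\le0,\qquad s:=\sum_{i=1}^m\bigl(\di\rho(e_i)\bigr)^2\in[0,1],
$$
so that a suitable truncation of a constant minus $f\circ\varphi$ is $\Delta_M$-subharmonic with controlled $L^\infty$-norm. A natural ansatz is therefore to use, in place of the radial profile $g$ of Lemma \ref{basicconstr}, a piecewise radial profile equal to a quadratic core on $B_{\epsilon_j/2}(x_j)$ (which reproduces the lower bound $\Delta u_j\ge m/2$ by the computation \eqref{primadia}) glued to a rescaled truncation of a primitive of $h^{1-m}$ on the outer annulus; in the Cartan-Hadamard setting this primitive is the radial capacitary potential of the model $M^m_h$ and carries the expected $\epsilon^m$-scaling thanks to the integrability of $h^{1-m}$ at infinity, compare Remark \ref{rem_parabolicity}.

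The main obstacle, and the reason why the statement is still a conjecture, is that no such bump can be supported on a \emph{single} connected component of $\varphi^{-1}(B_{\epsilon_j}(x_j))$: if $u\ge0$ vanishes at some preimage $p$ of $x_0$ and satisfies $\Delta u\ge c>0$ on the entire connected component $\Omega$ of $\varphi^{-1}(B_\epsilon(x_0))$ through $p$, then the explicit torsion-type function $u_\star=c(\epsilon^2-|\varphi-x_0|^2)/(2m)$, which satisfies $\Delta_M u_\star\le-c$ by the Hessian comparison inequality $\Delta_M|\varphi-x_0|^2\ge2m$ (valid for minimal $\varphi$ in any Cartan-Hadamard target), forces $\Delta(u+u_\star)\ge0$ on $\Omega$ and hence $\max_{\partial\Omega}u\ge c\epsilon^2/(2m)$ by the maximum principle, independently of the dimension $m$. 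Breaking this barrier will require either replacing the pointwise lower bound $\Delta u_j\ge c$ by a weaker distributional or capacitary substitute concentrated on a subset of $\varphi^{-1}(B_{\epsilon_j}(x_j))$ of small relative volume, or abandoning Lemma \ref{basicconstr} in favor of the Michael-Simon-Hoffman-Spruck Sobolev inequality $\|u\|_{L^{2m/(m-2)}}\le C\|du\|_{L^2}$ on minimal $m$-submanifolds of Cartan-Hadamard targets: via H\"older this reduces the conjecture to the volume estimate $\vol(\varphi^{-1}(B_\delta(\lim\varphi\cap D)))\to0$ as $\delta\to0$, whose deduction from $\haus^m(\lim\varphi\cap D)=0$ itself requires a multiplicity-free density bound $\vol(\varphi^{-1}(B_r(x)))\le Cr^m$ that can fail for non-proper immersions with thick limit set, and we expect this multiplicity issue to be the true crux of any rigorous argument.
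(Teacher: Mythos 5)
The statement you were asked to prove is explicitly labeled a \emph{conjecture} in the paper: it appears in Section~\ref{sec_sharpness} under the heading ``Open problems,'' and the authors give no proof of it. There is therefore no ``paper's own proof'' to compare against. Your response correctly recognizes this, and the bulk of your text is an analysis of why the method of Theorem~\ref{teo_maingenerale} does not extend, rather than a proof attempt.

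Your obstruction argument is the most valuable part and is essentially correct. The key point is that the Persson--Barta scheme in the paper needs, for each covering ball $B_{\eps_j}(x_j)$, a nonnegative bump $u_j$ with $\Delta u_j$ bounded below by a constant of order one on $\varphi^{-1}(B_{\eps_j}(x_j))$ and $\|u_j\|_{L^\infty}$ small; Lemma~\ref{basicconstr} delivers $\|u_j\|_{L^\infty}\lesssim\eps_j^2$ when $\theta>1$, and a sharper $\eps_j^m$-bound is what the conjecture would seem to require. Your torsion-function comparison shows this is impossible for a bump carrying a uniform interior Laplacian lower bound: using the Hessian comparison $\Delta_M\rho^2\ge 2m$ for minimal $m$-submanifolds of a Cartan--Hadamard manifold (which is indeed valid), the auxiliary function $u_\star = c(\eps^2-\rho^2)/(2m)$ satisfies $\Delta_M u_\star\le -c$, so $u+u_\star$ is subharmonic on a connected component $\Omega\ni p$ of $\varphi^{-1}(B_\eps(x_0))$, and the maximum principle forces $\max_{\partial\Omega}u\ge c\eps^2/(2m)$ regardless of $m$. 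Two small caveats: this argument implicitly assumes $\Omega$ is relatively compact with $\varphi(\partial\Omega)\subseteq\partial B_\eps(x_0)$, which may fail for non-proper immersions, so it is a strong heuristic rather than a watertight no-go theorem; and your asserted computation $\Delta_M(|\varphi-x_0|^{2-m}\circ\varphi)=m(m-2)\rho^{-m}(s-1)$ with $s=\sum_i d\rho(e_i)^2$ is correct and does show the Newton kernel pulls back superharmonically, but by itself it does not repair the scaling problem, as you note. Your closing suggestion --- replace the pointwise superharmonicity by a Sobolev-type estimate (Michael--Simon--Hoffman--Spruck), which would reduce the conjecture to a monotonicity/density bound $\vol(\varphi^{-1}(B_r(x)))\lesssim r^m$ that can fail for non-proper immersions with thick limit set --- is a reasonable diagnosis of where the real difficulty lies, and is consistent with the authors' own remark that the exponent $2$ in Lemma~\ref{basicconstr} ``is essential for the arguments of the proof.'' In short: you did not and could not prove the statement, but you correctly identified it as open and gave a coherent account of the obstruction.
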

\item[$(2)$] Although Theorem \ref{teo_maindim2} is suited for each of the
examples $(i), \ldots, (v)$ in the Introduction, as well as to deal with
solutions of general Plateau problems, it is still unapplicable for the original
example of Nadirashvili in \cite{Nadirashvili}. The reason is that it seems hard
to deduce the behaviour of the limit set from the original construction. Could
it be possible that, for some choice of the parameters in Nadirashvili's
construction, the spectrum of the resulting minimal surface is discrete?
\item[$(3)$] As we have seen in the Introduction, infinite sheet coverings of
complete bounded minimal surfaces always have non-empty essential spectrum. On
the other hand, Example \ref{ex_andrade} establishes the existence of incomplete
minimal surfaces with $\sigma_\ess(-\Delta) \neq \emptyset$ and whose immersion
map $\varphi$ cannot factorize via some Riemannian covering. One could naturally
ask the following
\begin{question}
Is it possible to find a complete, bounded minimal surface $\varphi : M \ra
\R^3$ with non-empty essential spectrum  and such that $\varphi$ cannot
factorize via a Riemannian covering map?
\end{question}

\end{itemize}

\noindent \textbf{Acknowledgements:} this paper has been completed while the third author was enjoying the hospitality of the Universidade Federal do Cear\'{a}-Brazil. The authors are partially supported by  PRONEX/FUNCAP/CNPq. The third author is indebted to Andrea
Carlo Mennucci and Carlo Mantegazza for useful e-mail discussions about the
regularity of distance functions. He also wants to thank Matteo Novaga for his
kind willingness.

%%%%%%%%%%%%%%%%%%%%%%%%%%%%%%%%%%%%%%%%%%%%%%%%%%%%%%%%%%

\end{document}